\documentclass[11pt]{amsart}

\usepackage{xcolor}

\usepackage[margin=1in]{geometry}

\usepackage{amsmath}
\usepackage{amssymb}
\usepackage{amsfonts}
\usepackage{amsthm}
\usepackage{mathtools}
\usepackage{bm}
\usepackage{mathrsfs}
\usepackage{esint}

\allowdisplaybreaks[1]

\usepackage{enumitem}

\usepackage{aliascnt}

\theoremstyle{plain}
\newtheorem{theorem}{Theorem}[section]

\newaliascnt{lemma}{theorem}
\newtheorem{lemma}[lemma]{Lemma}
\aliascntresetthe{lemma}

\newaliascnt{proposition}{theorem}
\newtheorem{proposition}[proposition]{Proposition}
\aliascntresetthe{proposition}

\newaliascnt{corollary}{theorem}
\newtheorem{corollary}[corollary]{Corollary}
\aliascntresetthe{corollary}

\newaliascnt{conjecture}{theorem}

\aliascntresetthe{conjecture}

\theoremstyle{definition}

\newaliascnt{definition}{theorem}
\newtheorem{definition}[definition]{Definition}
\aliascntresetthe{definition}

\newaliascnt{assumption}{theorem}

\aliascntresetthe{assumption}

\newaliascnt{example}{theorem}

\aliascntresetthe{example}

\newaliascnt{notation}{theorem}

\aliascntresetthe{notation}

\theoremstyle{remark}

\newaliascnt{remark}{theorem}
\newtheorem{remark}[remark]{Remark}
\aliascntresetthe{remark}

\numberwithin{equation}{section}

\DeclarePairedDelimiterX{\inner}[2]{\langle}{\rangle}{#1,#2}

\newcommand{\dd}{\mathop{}\!\mathrm{d}}

\usepackage[
    pagebackref=true,
    colorlinks=true,
    linkcolor=blue,
    citecolor=blue,
    urlcolor=blue
]{hyperref}

\usepackage[nameinlink]{cleveref}

\crefname{theorem}{theorem}{theorems}
\Crefname{theorem}{Theorem}{Theorems}
\crefname{lemma}{lemma}{lemmas}
\Crefname{lemma}{Lemma}{Lemmas}
\crefname{proposition}{proposition}{propositions}
\Crefname{proposition}{Proposition}{Propositions}
\crefname{corollary}{corollary}{corollaries}
\Crefname{corollary}{Corollary}{Corollaries}
\crefname{conjecture}{conjecture}{conjectures}
\Crefname{conjecture}{Conjecture}{Conjectures}
\crefname{definition}{definition}{definitions}
\Crefname{definition}{Definition}{Definitions}
\crefname{assumption}{assumption}{assumptions}
\Crefname{assumption}{Assumption}{Assumptions}
\crefname{example}{example}{examples}
\Crefname{example}{Example}{Examples}
\crefname{notation}{notation}{notations}
\Crefname{notation}{Notation}{Notations}
\crefname{remark}{remark}{remarks}
\Crefname{remark}{Remark}{Remarks}

\usepackage{mathrsfs}
\title
{The quermassintegral inequalities
for \( k \)-convex {\em spiral-star-shaped} domains}

\author{Junfang Li}
\address{Department of Mathematics\\
University of Alabama at Birmingham\\
Birmingham, AL 35294}
\email{jfli@uab.edu}

\subjclass[2020]{53E10, 53C42, 35K55}
\keywords{Inverse curvature flow, 
  Alexandrov--Fenchel quermassintegral inequality, 
{\em spiral-star-shaped} domains}

\date{}

\begin{document}
\begin{abstract}
We extend the quermassintegral inequalities of our earlier work~\cite{GuanLi2009}
from star-shaped domains to a substantially broader class of domains in
Euclidean space, which we call {\em spiral-star-shaped}. A smooth domain is
spiral-star-shaped if its boundary satisfies
\[
  \langle X+\mathsf A X,\nu\rangle>0
\]
for some fixed skew-symmetric linear map \(\mathsf A\), where \(X\) is the
position vector and \(\nu\) is the outward unit normal. The vector field
\(X+\mathsf A X\) is conformal Killing, with dilation component \(X\) and
rotational component \(\mathsf A X\); its trajectories combine expansion and
rotation, giving rise to the spiral terminology.  
When \(\mathsf A=0\), it reduces to ordinary
star-shapedness, while nonzero \(\mathsf A\) allows domains that are not
star-shaped in the usual radial sense.

For smooth \(k\)-convex spiral-star-shaped domains, we prove long-time
existence and exponential convergence to a round sphere for a locally
constrained inverse curvature flow. The positive spiral support function
provides the geometric control needed to treat initial hypersurfaces that
need not be star-shaped in the usual sense. Monotonicity of the
quermassintegrals along the flow yields sharp quermassintegral inequalities
in this enlarged class, with equality if and only if the domain is a ball.
\end{abstract}

\maketitle

\section{Introduction}
\label{sec:introduction}

The Alexandrov--Fenchel inequalities for the quermassintegrals of
convex domains are fundamental in classical geometry. Extending
these inequalities beyond convexity has motivated the development
of methods from fully nonlinear partial differential equations,
geometric flows, optimal transport, and potential theory. In
Euclidean space, Guan and Li \cite{GuanLi2009} established the sharp
quermassintegral inequalities for $k$-convex star-shaped domains.
Their proof uses the inverse curvature flows studied by Gerhardt
and Urbas \cite{Gerhardt1990,Urbas1990}, whose long-time existence
and asymptotic convergence convert monotonicity of curvature
integrals into sharp geometric inequalities. In particular, for
a smooth \( k \)-convex star-shaped domain
$\Omega\subset\mathbb R^{n+1}$, the inverse curvature flow
yields the celebrated Alexandrov-Fenchel quermassintegral inequalities.
For $0\le m\le n$,
\begin{equation}\label{AFI}
\Big(\frac{V_{(n+1)-m}(\Omega)}{V_{(n+1)-m}(B)}\Big)^{\frac{1}{n+1-m}}\le
\Big(\frac{V_{n-m}(\Omega)}{V_{n-m}(B)}\Big)^{\frac{1}{n-m}},
\end{equation} where $B$ is the standard ball in $\mathbb R^{n+1}$.
The equality holds if and only if $\Omega$ is a ball. The case $m=0$
is the classical isoperimetric inequality and \( m=1 \) is the
classical Minkowski inequality. 
Huisken~\cite{Huisken2007} proved that Minkowski inequality holds
for outward-minimizing hypersurfaces.

Since the work~\cite{GuanLi2009}, curvature-flow methods have been developed
extensively in other ambient geometries and through alternative
choices of flow. There have also been important advances in the
original Euclidean setting. Chang and Wang
\cite{ChangWang2013,ChangWang2014}, and subsequently Qiu
\cite{Qiu2015}, obtained inequalities for nonconvex domains using
optimal transport. Their results include estimates for
quermassintegrals and sharp inequalities comparing volume with
higher mean-curvature integrals under suitable curvature
assumptions. Weak inverse mean curvature flow and nonlinear
potential theory provide a different approach to the Minkowski
inequality for outward-minimizing domains. In particular,
Agostiniani, Fogagnolo, and Mazzieri
\cite{AgostinianiFogagnoloMazzieri2022} established an extended
Minkowski inequality involving the outward-minimizing hull,
recovering the classical inequality for smooth outward-minimizing
domains. These developments broaden the range of domains for
which geometric inequalities are available, while leaving a
distinct question concerning smooth evolution: which geometric
conditions permit inverse curvature flow to exist for all
time and converge, after rescaling, to a round sphere?

Ordinary star-shapedness plays a central role in the classical
smooth theory by providing a global radial parametrization and
control of the support function. Nevertheless, it is not necessary
for smooth global evolution. Harvie \cite{Harvie2022, Harvie2023} proved
long-time existence and rescaled convergence for a class of
non-star-shaped, mean-convex hypersurfaces with axial symmetry and
suitable geometric conditions on their necks, obtaining the
Minkowski inequality for the corresponding domains. It is therefore
natural to seek further conditions that allow non-star-shaped
initial hypersurfaces without imposing symmetry.

The present work is motivated by Flynn and Reznikov
\cite{FlynnReznikov2023}, who introduced a modification of the
Guan--Li mean curvature type flow \cite{GuanLi2015} adapted to
conformal Killing fields perturbed by other Killing fields.
Our starting point is the following locally
constrained inverse-curvature flow in Euclidean space.
The space form versions of this flow can be found in our 
work~\cite{GuanLi2026} or its earlier version~\cite{BGL}.

\begin{equation}
    \partial_t X
    =
    \left(
        \frac{1}{F}
        -\frac{u}{c_{n,k}}
    \right)\nu,
    \qquad
    F=\frac{\sigma_{k+1}}{\sigma_k},
    \label{eflow}
\end{equation}
where
\[
    u=\langle X,\nu\rangle,
    \qquad
    c_{n,k}=\frac{n-k}{k+1}.
\]

We prove that smooth strictly \( k \)-convex {\em spiral-star-shaped } 
initial hypersurfaces admit a smooth inverse curvature flow~\eqref{eflow}
for all time, and that the evolving hypersurfaces converge
exponentially to a round sphere. 
The spiral support function supplies the geometric control needed to extend
the classical smooth theory to this class. 
As a consequence, we obtain the sharp quermassintegral inequalities for
{\em spiral-star-shaped} strictly \( k \)-convex domains, 
extending the corresponding star-shaped case of \cite{GuanLi2009}.
The result identifies a first-order geometric condition under which the
Euclidean inverse curvature flow itself provides a smooth
deformation to a sphere and a direct proof of the inequality.

A principal difference from the approach of
Flynn--Reznikov~\cite{FlynnReznikov2023} is that we do not modify the
flow by inserting the rotational Killing field into our evolution equation. 
Instead, we keep exactly the inverse-curvature flow
\eqref{eflow} from~\cite{GuanLi2026,GuanLi2009} and enlarge the class of initial
hypersurfaces on which the flow can be analyzed.

More precisely, let
\[
    \mathsf A\in\mathfrak{so}(n+1)
\]
be fixed and let
\[
    \mathcal K(X)=\mathsf A X
\]
be the corresponding rotational Killing field. We replace the usual
radial support function by
\[
    \widehat u
    :=
    \langle X+\mathcal K(X),\nu\rangle.
\]
The condition
\[
    \widehat u>0
\]
means that the hypersurface is transverse to the integral curves of
the conformal Killing field
\[
    Y(X):=X+\mathcal K(X).
\]
These integral curves are obtained by combining dilation and rotation,
and in dimension two they are logarithmic spirals. We therefore refer
to such hypersurfaces as {\em spiral-star-shaped}.

This condition expresses transversality
to the expanding spiral trajectories of $Y$. It includes ordinary
star-shapedness as the case $\mathsf A=0$, while allowing
hypersurfaces that are not radial graphs.

\begin{figure}[!htb]
   \begin{minipage}{0.48\textwidth}
     \centering
     \includegraphics[width=.7\linewidth]{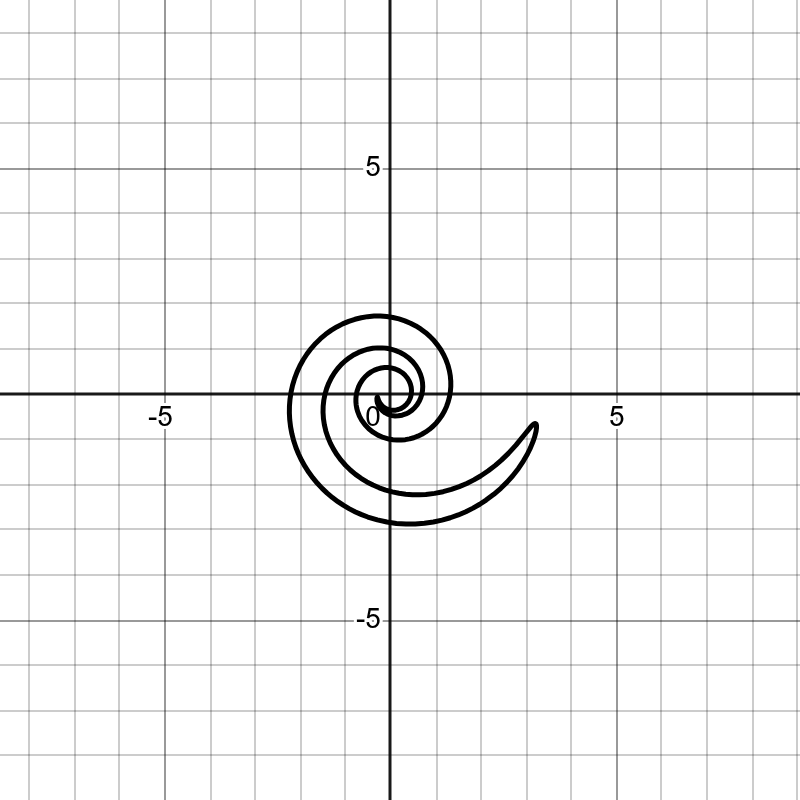}
     \caption{}
     \label{spiral1}
   \end{minipage}\hfill
\end{figure}

\medskip

In this paper, we prove the following main theorem.

\begin{theorem}
\label{thm:main}
Let \(0\le k\le n-1\), and let
\(M_0\subset\mathbb R^{n+1}\) be a smooth closed embedded hypersurface
whose principal-curvature vector satisfies
\[
    \kappa[M_0]\in\Gamma_{k+1}.
\]
Suppose furthermore that \(M_0\) is {\em spiral-star-shaped} with
respect to some fixed
\[
    \mathsf A\in\mathfrak{so}(n+1),
\]
namely,
\[
    \widehat u
    =
    \langle X+\mathsf A X,\nu\rangle
    >0
    \qquad\text{on }M_0.
\]
Then the flow~\eqref{eflow} admits a unique smooth admissible solution
for all
\[
    t\in[0,\infty).
\]
Moreover, \(M_t\) converges exponentially in \(C^\infty\) to a round
sphere centered at the origin as \(t\to\infty\).
\end{theorem}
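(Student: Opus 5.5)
The strategy is to show that the spiral support function \(\widehat u\) stays positive along \eqref{eflow}, with quantitative lower bounds, and then use it in the Gerhardt--Urbas/Guan--Li machinery wherever the radial support function \(u\) is normally used. The structural input is that \(Y=X+\mathsf A X\) is conformal Killing, with \(\mathcal L_Y g = 2g\) (the skew part contributes no dilation). The two functions are related by
\[
\widehat u = u + \langle \mathsf A X,\nu\rangle,
\]
and since \(\mathcal K=\mathsf AX\) is Killing, \(\langle \mathcal K,\nu\rangle\) satisfies the Jacobi-type identity for rotational fields.

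\textbf{Step 1: evolution of \(\widehat u\).} Write the speed as \(f=1/F-u/c_{n,k}\). I would derive the evolution equation for \(\widehat u\) along \eqref{eflow}. For any Killing field \(\mathcal K\) one has \(\partial_t\langle\mathcal K,\nu\rangle = -\langle \mathcal K,\nabla f\rangle\), and \(\nabla_i\langle \mathcal K,\nu\rangle = h_i^j\langle \mathcal K, e_j\rangle\). For the \(1/F\) part this gives the linearized operator
\[
L = F^{-2}F^{ij}\nabla_{ij},
\]
whose structure mirrors that of \(u\). Rotation invariance of \(F\) kills the inhomogeneous terms of the \(\mathcal K\)-part, and the term \(-\frac{1}{c_{n,k}}\langle\nabla u,\mathcal K\rangle\) becomes a gradient term after using \(\nabla u = h(X^T)\). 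The target is an equation of the form
\[
\partial_t \widehat u = L\widehat u + \langle b,\nabla\widehat u\rangle + \Big(\tfrac{F^{ij}h_{ij}^2}{F^2}\cdot(\ldots)\Big)\widehat u + \tfrac{1}{F}-\tfrac{u}{c_{n,k}}\text{-type terms}.
\]
From this I would get, by the maximum principle, \(\min\widehat u(t)\ge c\,\min\widehat u(0)\), after comparing the spheres \(|X|=r(t)\), which are static under \eqref{eflow}. In parallel, the maximum principle applied to \(|X|^2\) gives \(C^0\) bounds: \(\max|X|\) is nonincreasing and \(\min|X|\) is nondecreasing, exactly as in the star-shaped case, since that argument only uses extrema of \(|X|\), where \(X\parallel\nu\). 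Note that at such points \(\langle\mathsf AX,\nu\rangle=0\), so \(\widehat u=u=|X|\) there.

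\textbf{Step 2: curvature bounds.} Without a radial graph, the \(C^1\) bound becomes exactly the lower bound on \(\widehat u\) together with \(|X|\le C\). These give uniform transversality to the spiral foliation, so \(M_t\) is locally a graph over the leaves \(\{e^{s}\exp(s\mathsf A)\theta\}\) with controlled slope. For the upper bound on \(F\), I would take the maximum principle of \(F\) (or of \(\frac1F-\frac{u}{c}\)). For the bound on principal curvatures, I would use the test function \(\log\kappa_{\max}-\log(\widehat u-\tfrac12\min\widehat u)\) in place of the classical \(\log\kappa_{\max}-\log u\). This is the step I expect to be the main obstacle: the classical argument uses that \(u\) satisfies a clean equation with a good \(F^{ij}h_{ik}h_{kj}\) term, while \(\widehat u\) carries the additional rotational component. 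I would try to show that the rotational part contributes only terms controlled by \(|\mathsf A||X|\), times quantities already absorbed by the good term. The lower bound for \(F\) (preserving \(\Gamma_{k+1}\)) then follows as in \cite{GuanLi2009}. Krylov--Safonov and Schauder estimates on the spiral-graph parametrization give higher regularity, hence long-time existence, with uniqueness from standard parabolic theory.

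\textbf{Step 3: convergence.} The quermassintegral ratio is monotone along \eqref{eflow}, and the volume (or the appropriate \(V_{n-k}\)) is preserved. Combined with the uniform estimates, this gives subsequential convergence to a stationary hypersurface with \(F u = c_{n,k}\), which must be a centered sphere by Minkowski integral identities. Exponential convergence then follows by linearizing at the sphere, or by showing that \(|X|\) and \(u/|X|\) become uniformly close to constants with exponential rates via the maximum principle.
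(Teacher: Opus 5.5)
Your $C^0$ argument via the extrema of $|X|$ is the paper's (in the spiral parametrization $\rho=|X|$), and your overall architecture matches. However, the one genuinely new estimate, the lower bound for $\widehat u$, is never actually obtained, and the formula you start from is wrong. For $\mathcal K=\mathsf AX$ one has
\[
\nabla_i\langle\mathcal K,\nu\rangle=\langle\mathsf Ae_i,\nu\rangle+h_i{}^j\langle\mathcal K,e_j\rangle ,
\]
and the first term does not vanish. If you carry the computation through with your version, you arrive at $\mathcal L\widehat u=\frac{\dot F(h^2)}{F^2}\widehat u-\frac{u}{c_{n,k}}\ge\frac{\mathring u}{c_{n,k}}$, with $\mathcal L$ as in \eqref{eq:L}. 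That leftover zeroth-order term has no sign, so the maximum principle does not close. Comparison with the static centered spheres does not rescue it: it gives only $C^0$ control and says nothing about the angle between $\nu$ and $Y$.

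The dropped term is exactly what makes the equation close. Since $\langle\mathsf AX^\top,\nu\rangle=\langle\mathsf AX,\nu\rangle=\mathring u$, symmetry of $h$ gives $\mathcal K^\top(u)=h(X^\top,\mathcal K^\top)=X^\top(\mathring u)-\mathring u$. Equivalently, $Y^\top(u)-u=X^\top(\widehat u)-\widehat u$ (\Cref{lem:spiral-support}). Combine this with $\partial_t\widehat u=f-\langle Y^\top,\nabla f\rangle$ and with $F^{ij}\nabla_{ij}\widehat u=\langle\nabla F,Y^\top\rangle-\dot F(h^2)\widehat u+F$; in the latter, the terms $h_i{}^l\langle\mathsf AX_j,X_l\rangle$ drop out after contraction with $F^{ij}$ by skew-symmetry. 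The result is the homogeneous equation $\mathcal L\widehat u=(\dot F(h^2)/F^2-1/c_{n,k})\widehat u$ of \Cref{lem:evolution-U}. Then $\dot F(h^2)\ge F^2/c_{n,k}$ shows that $\min\widehat u$ is nondecreasing. Everything downstream in your plan rests on this identity: the sign $u=|X|$ at the minimum of $|X|$, the $C^1$ bound, and the curvature estimate.

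Once the exact equation is available, the obstacle you anticipate in Step 2 disappears. $\widehat u$ satisfies literally the equation that $u$ satisfies in the star-shaped case. So for $W_i{}^j=h_i{}^j/\widehat u$, the $\dot F(h^2)$ terms coming from $\mathcal L h_i{}^j$ and $\mathcal L\widehat u$ cancel exactly. The bounds $F\le\max_{M_0}F$ and $\widehat u\ge\min_{M_0}\widehat u$ then close the estimate (\Cref{cor:C2-bound}), with no shift $\widehat u-\frac12\min\widehat u$ and no $|\mathsf A||X|$ error analysis.

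Separately, the lower bound for $F$ is not a direct citation of \cite{GuanLi2009}. You need the scalar equation $\partial_t\gamma=f/\widehat u=\frac{1}{\widehat uF}-\frac{u}{c_{n,k}\widehat u}$ for the spiral graph $X=\rho e^{\gamma\mathsf A}\theta$. You then apply the maximum principle to $\partial_t\gamma$, and use the $C^1$ bound to control $u/\widehat u=1+\mathcal K\gamma$, as in \Cref{prop:F-lower-bound}.
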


\begin{remark}
When
\[
    u=\langle X,\nu\rangle>0,
\]
the hypersurface is transverse to the radial rays
\[
    s\longmapsto e^s\theta.
\]
By contrast,
\[
    \widehat u
    =
    \langle X+\mathsf A X,\nu\rangle>0
\]
means that \(M\) is transverse to the integral curves
\[
    s\longmapsto e^s e^{s\mathsf A}\theta
\]
of the conformal Killing field \(Y=X+\mathsf A X\). Thus rotational
star-shapedness replaces the radial foliation by a simultaneous
dilation--rotation foliation.
\end{remark}

The main observation is that the rotational geometry can be absorbed
entirely into the parametrization of the hypersurface. Writing a
{\em spiral-star-shaped} hypersurface as a graph along the integral
curves of \(Y\), we derive a scalar parabolic equation whose geometric
evolution is still exactly \eqref{eflow}. This allows the
a priori estimates and convergence theory developed in~\cite{GuanLi2026, GuanLi2021, BGL} to
be combined with new estimates for the rotational support function.

In particular, we establish uniform \(C^0\) and gradient estimates,
two-sided bounds for \(F\), and uniform curvature estimates. These
estimates imply long-time existence and exponential convergence of
the flow to a round sphere. Thus, at least in Euclidean space, the
inverse-curvature flow of~\cite{GuanLi2026} naturally extends from radial
star-shaped hypersurfaces to the much larger {\em spiral-star-shaped}
class.

As applications, we generalize the main theorem of~\cite{GuanLi2009}
to {\em spiral-star-shaped} domains in \( \mathbb R^{n+1} \).

\begin{theorem}
\label{thm:afi-inequalities}
Suppose \( \Omega \) is a smooth \( k \)-convex spiral-star-shaped domain
in \(\mathbb R^{n+1}  \) with respect to
\[
    \mathsf A\in\mathfrak{so}(n+1),
\]
then the classical quermassintegral inequalities~\eqref{AFI}
of~\cite{GuanLi2009,Trudinger1994} hold for all \( 0\le m\le k \).
The equality holds if and only if \( \Omega \) is a ball.
\end{theorem}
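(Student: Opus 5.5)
The plan is to run the flow \eqref{eflow} from \(M_0=\partial\Omega\) and exploit monotonicity of the quermassintegrals along it, as in \cite{GuanLi2009,GuanLi2026}. A \(k\)-convex domain in the sense of \Cref{thm:afi-inequalities} has \(\kappa\in\overline{\Gamma_k}\), whereas \Cref{thm:main} requires \(\kappa[M_0]\in\Gamma_{k+1}\). The first step therefore reduces to the strictly convex, smooth setting. If \(\kappa[\partial\Omega]\in\Gamma_{k+1}\), we apply \Cref{thm:main} directly (flowing with index \(k\), so that \(F=\sigma_{k+1}/\sigma_k\)). In general we proceed inductively in \(m\): for the inequality between \(V_{n+1-m}\) and \(V_{n-m}\) we use the flow with \(F=\sigma_m/\sigma_{m-1}\), i.e.\ \Cref{thm:main} with \(k\) replaced by \(m-1\). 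This requires \(\kappa\in\Gamma_m\), which holds for \(m\le k\), and open cones are all we need. The spiral-star-shaped condition \(\widehat u>0\) is independent of \(k\), so \Cref{thm:main} applies for each such index. The case \(m=0\) (isoperimetric) can be handled either by the \(m=1\) flow combined with the chain, or simply cited as classical.

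The core step is the monotonicity computation along \(\partial_t X=(1/F-u/c)\nu\), with \(F=\sigma_m/\sigma_{m-1}\) and \(c=c_{n,m-1}\). Writing \(\int \sigma_j\) for the quermassintegral \(V_{n-j}\) up to normalization, the variation formula gives
\[
\frac{d}{dt}\int_{M_t}\sigma_{j}\,d\mu=(j+1)\int_{M_t}\sigma_{j+1}\Big(\frac1F-\frac{u}{c}\Big)d\mu .
\]
The Minkowski formula \((n-j)\int\sigma_j=(j+1)\int u\,\sigma_{j+1}\) converts the \(u\)-term into a multiple of \(\int\sigma_j\). For \(j=m-1\) this yields \(\int\sigma_{m}/F=\int\sigma_{m-1}\), and with \(c=(n-m+1)/m\) the derivative of \(\int\sigma_{m-1}\) vanishes, i.e.\ \(V_{n-m+1}\) is preserved. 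For \(j=m\), the Newton--Maclaurin inequality \(\sigma_{m+1}\sigma_{m-1}\le c'\sigma_m^2\) gives \(\int\sigma_{m+1}/F\le C\int\sigma_m\), and the constants combine to make \(\int\sigma_m\), i.e.\ \(V_{n-m}\), nonincreasing. For \(j=m\) the top index \(\sigma_{m+1}\) need not be positive; this is harmless because Newton--Maclaurin holds in \(\Gamma_m\) as an inequality, with the case \(\sigma_{m+1}\le0\) trivial. All these formulas are Euclidean-invariant and do not involve \(\mathsf A\). The rotational field enters only through the existence and convergence supplied by \Cref{thm:main}, which is exactly why the paper keeps \eqref{eflow} unmodified.

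By \Cref{thm:main}, \(M_t\) converges in \(C^\infty\) to a round sphere \(S_r\), so the quantities converge, with uniform bounds allowing passage to the limit. Preserving \(V_{n+1-m}\) and decreasing \(V_{n-m}\) then give \(V_{n-m}(\Omega)\ge V_{n-m}(B_r)\) with \(V_{n+1-m}(\Omega)=V_{n+1-m}(B_r)\), which is exactly \eqref{AFI}. For equality, \(dV_{n-m}/dt\equiv0\) at \(t=0\) forces equality in Newton--Maclaurin, so \(M_0\) is umbilic and \(\Omega\) is a ball. The main obstacle I expect is this equality case for \(m\) with \(\sigma_{m+1}\) possibly changing sign: Newton--Maclaurin equality must be extracted pointwise where \(\sigma_{m+1}\) could vanish. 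I would handle it by noting that equality propagates for \(t\ge0\), so the rigidity can be applied on a short-time smooth flow where \(M_t\) inherits umbilicity, as in \cite{GuanLi2009}.
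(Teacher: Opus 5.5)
Your proposal is correct and follows the route the paper intends. The paper gives no separate proof of this theorem beyond invoking Theorem~\ref{thm:main} together with the Guan--Li monotonicity, which is exactly your argument: run \eqref{eflow} with $F=\sigma_m/\sigma_{m-1}$ and $c=c_{n,m-1}$, use the Minkowski formula and Newton--Maclaurin to see that $V_{n+1-m}$ is preserved and $V_{n-m}$ is nonincreasing, compare with the limiting sphere, and cite the classical isoperimetric inequality for $m=0$.

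Two minor remarks. First, $k$-convexity must be read as $\kappa\in\Gamma_k$ (the open cone), which is what you actually use, since in general $\overline{\Gamma_k}\not\subset\Gamma_m$. Second, the equality case has no real obstacle: at points where $\sigma_{m+1}\le 0$ the nonnegative integrand $c'\sigma_m-\sigma_{m+1}\sigma_{m-1}/\sigma_m$ is strictly positive. So vanishing of $\frac{d}{dt}V_{n-m}$ at $t=0$ already forces $\sigma_{m+1}>0$ and equality in Newton--Maclaurin everywhere on $M_0$. Since $\sigma_m>0$, this gives umbilicity directly, and no short-time propagation argument is needed.
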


The rest of the paper is organized as follows.
In~\Cref{sec:spiral-star-shaped-domains} we introduce 
{\em spiral-star-shaped} domains and discuss the basic properties. 
In~\Cref{sec:spiral-radial-parametrization}, 
we introduce and construct the associated spiral graph parametrization.
In~\Cref{sec:ivp-C0}, we reduce the flow to be a scalar initial-value problem
and obtain its \(C^0\) estimate.
In~\Cref{sec:gradient-estimate}, we proved \( C^1 \) estimate which also leads to
a uniform positive lower bound for \(F\).
In~\Cref{sec:curvature-estimate} we combine these estimates with
the curvature evolution equations from~\cite{GuanLi2026} to obtain uniform
\(C^2\) estimates.
Standard parabolic regularity and the convergence
argument of~\cite{GuanLi2026, GuanLi2021, BGL} then yield long-time existence and exponential
convergence.

\section{Properties of spiral-star-shaped domains}
\label{sec:spiral-star-shaped-domains}

Let
\[
    \mathsf A\in\mathfrak{so}(n+1)
\]
be a fixed skew-symmetric linear map. The associated rotational
Killing field on \(\mathbb R^{n+1}\) is
\[
    \mathcal K(x):=\mathsf A x.
\]
We combine this field with the Euclidean dilation field \(X\) and set
\[
    Y(X):=X+\mathcal K(X)=X+\mathsf A X.
\]
Clearly, the vector field \(Y\) is conformal Killing.

\begin{definition}[Spiral support function]
\label{def:spiral-support}
Let \(M\subset\mathbb R^{n+1}\) be a smooth oriented hypersurface with
position vector \(X\) and outward unit normal \(\nu\). We define the
spiral support function associated with \(\mathsf A\) by
\begin{equation}
    \widehat u
    :=
    \langle Y(X),\nu\rangle
    =
    \langle X+\mathsf A X,\nu\rangle.
    \label{eq:spiral-support}
\end{equation}
Equivalently,
\[
    \widehat u=u+\mathring u,
\]
where
\[
    u:=\langle X,\nu\rangle,
    \qquad
    \mathring u:=\langle\mathsf A X,\nu\rangle
\]
are respectively the radial and rotational parts of the support
function.
\end{definition}

\begin{definition}[Spiral-star-shapedness]
\label{def:spiral-star-shaped}
A smooth closed hypersurface \(M\subset\mathbb R^{n+1}\) is said to be
spiral-star-shaped with respect to \(\mathsf A\) if
\[
    \widehat u>0
    \qquad\text{on }M.
\]
When \(\mathsf A=0\), this reduces to the usual radial star-shaped
condition
\[
    u=\langle X,\nu\rangle>0.
\]
\end{definition}

The first property is an application of the fact that 
when \( \mathsf A \) is skew-symmetric, the rotational Killing 
field \( \mathcal K(x)=\mathsf Ax \) is divergence-free.

\begin{lemma}
    \label{lem:K-divergence-free} 

Let
\(
    \mathsf A\in\mathfrak{so}(n+1)
\)
be a fixed skew-symmetric linear map.
Suppose \( \Omega \subset \mathbb R^{n+1} \) is a bounded domain. 
Then
\begin{align*}
    \int_{\partial \Omega} \mathring u d\mu_g
    =\int_{\partial \Omega} \langle \mathsf A X, \nu\rangle d\mu_g= 0,
\end{align*}
where \( X \) is the position vector and  \( \nu \) is
the outward unit normal.

Moreover,
\begin{align*}
    \int_{\partial \Omega} \widehat u d\mu_g
    =\int_{\partial \Omega} \langle X+ \mathsf A X, \nu\rangle d\mu_g
    =(n+1)|\Omega|.
\end{align*}
\end{lemma}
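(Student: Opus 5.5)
The plan is to apply the divergence theorem on the bounded domain \(\Omega\), whose boundary \(\partial\Omega\) is smooth with outward unit normal \(\nu\), to each of the two vector fields \(X\) and \(\mathcal K(X)=\mathsf A X\).

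First, for the rotational part, compute the divergence of \(\mathcal K\). Writing \(\mathcal K^i(x)=\sum_j a_{ij}x^j\) with \(\mathsf A=(a_{ij})\), we get
\[
\operatorname{div}\mathcal K=\sum_i \partial_i\Big(\sum_j a_{ij}x^j\Big)=\sum_i a_{ii}=\operatorname{tr}\mathsf A=0,
\]
since a skew-symmetric matrix has zero diagonal. The divergence theorem then gives
\[
\int_{\partial\Omega}\langle \mathsf A X,\nu\rangle\, d\mu_g=\int_\Omega \operatorname{div}\mathcal K\, dx=0,
\]
which is the first identity.

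Second, for the dilation field, \(\operatorname{div}X=n+1\) in \(\mathbb R^{n+1}\). Hence
\[
\int_{\partial\Omega}u\, d\mu_g=\int_\Omega (n+1)\,dx=(n+1)|\Omega|.
\]
Adding this to the first identity, and using \(\widehat u=u+\mathring u\) from \Cref{def:spiral-support}, gives the second formula. Equivalently, one can observe directly that \(\operatorname{div}Y=n+1\).

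There is no real obstacle here. The only point to state explicitly is the regularity assumption: \(\partial\Omega\) must be at least Lipschitz, and in the paper's setting it is smooth, so that the divergence theorem applies with the outward normal \(\nu\) and the induced measure \(d\mu_g\).
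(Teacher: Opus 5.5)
Your proposal is correct and follows essentially the same route as the paper: the paper computes \(\operatorname{div}(\mathsf A X)=\sum_i\langle \mathsf A e_i,e_i\rangle=0\) by skew-symmetry (which is your trace computation in frame form), applies the divergence theorem, and then adds the classical identity \(\int_{\partial\Omega}\langle X,\nu\rangle\,d\mu_g=(n+1)|\Omega|\). Your remark that \(\partial\Omega\) must be regular enough for the divergence theorem is a fair point, since the lemma states only ``bounded domain'' while the paper implicitly works with smooth boundaries.
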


\begin{proof}
    Let \( \{e_i\}_{i=1}^{n+1} \) be orthonormal basis in \( \mathbb R^{n+1} \). 
    We have
    \begin{align*}
        \mathrm{div}(\mathsf A X) =
        \langle \overline \nabla _{e_i}\mathsf AX, e_i\rangle
        = \langle \mathsf A\overline \nabla _{e_i}X, e_i\rangle
        = \langle \mathsf A e_i, e_i\rangle
        =0,
    \end{align*}
    where we have used that \( \mathsf A \) is skew-symmetric.
    By divergence theorem, we obtain
     \begin{align*}
         \int_{\partial \Omega} \langle \mathsf A X, \nu\rangle d\mu_g
         = \int_{\Omega} \mathrm{div}(\mathsf A X)d \Omega =0.
     \end{align*}
     The second identity follows from the first identity and the classical result
     \begin{align*}
    \int_{\partial \Omega} \langle X, \nu\rangle d\mu_g
    =(n+1)|\Omega|.
     \end{align*}
\end{proof}

\begin{definition}
   Suppose \( \Omega \subset \mathbb R^{n+1} \) is a bounded domain with boundary hypersurface \( M \). We denote 
   \begin{align*}
       \mathcal CM:=
      \bigg \{
           \mathsf A\in \mathfrak{so}(n+1)\ \big|\ 
\widehat u(x)>0 \mathrm{\ with\ respect\ to \ }\mathsf A, \quad
\forall x\in M
       \bigg\}
   \end{align*}
   to be the convex subset of \( \mathsf A \) in \( \mathfrak{so}(n+1) \) 
   with respect to which \( M \) is  {\em spiral-star-shaped}.
\end{definition}

It is immediate that \( \mathcal CM \) is convex. 
Suppose \(\mathsf A, \mathsf B\in \mathcal CM \), then
 \begin{align*}
     \langle X+\mathsf AX,\nu\rangle>0,\quad
     \langle X+\mathsf BX,\nu\rangle>0.
\end{align*}
For any convex combination 
\( t\mathsf A+(1-t)\mathsf B \in \mathfrak{so}(n+1)\),
\( t\in[0,1] \), we have
\begin{align*}
    \langle X+(t\mathsf A+(1-t)\mathsf B)X,\nu\rangle>0.
\end{align*}
Namely, \(  t\mathsf A+(1-t)\mathsf B\in \mathcal CM\).

Another property that compares \( \mathsf A,\mathsf B\in \mathfrak{so}(n+1) \) is the following result.

\begin{proposition}
    \label{prop:order-spiral-support-function}
   For \( \mathsf A,\mathsf B\in \mathfrak{so}(n+1) \), suppose we have
   \begin{align*}
       \widehat u_{\mathsf A}\ge \widehat u_{\mathsf B}.
   \end{align*}
   Then \( (\mathsf A-\mathsf B)X \) is tangential to  \( \partial \Omega \), i.e.
   \begin{align*}
        \langle(\mathsf A-\mathsf B)X, \nu \rangle =0.
   \end{align*}
\end{proposition}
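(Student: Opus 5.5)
The plan is to reduce the statement to \Cref{lem:K-divergence-free}, applied with the skew-symmetric map \(\mathsf A-\mathsf B\) in place of \(\mathsf A\). We work on the boundary \(\partial\Omega\) of a bounded smooth domain \(\Omega\), with outward normal \(\nu\), and set \(\mathsf C:=\mathsf A-\mathsf B\). Since \(\mathfrak{so}(n+1)\) is a linear space, \(\mathsf C\in\mathfrak{so}(n+1)\).

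First, write out the hypothesis. By \Cref{def:spiral-support} the radial parts of \(\widehat u_{\mathsf A}\) and \(\widehat u_{\mathsf B}\) are identical, so
\[
    \widehat u_{\mathsf A}-\widehat u_{\mathsf B}
    =\langle \mathsf A X,\nu\rangle-\langle \mathsf B X,\nu\rangle
    =\langle \mathsf C X,\nu\rangle .
\]
The assumption \(\widehat u_{\mathsf A}\ge \widehat u_{\mathsf B}\) therefore says that the function \(f:=\langle \mathsf C X,\nu\rangle\) is pointwise nonnegative on \(\partial\Omega\).

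Second, apply the first identity of \Cref{lem:K-divergence-free} to \(\mathsf C\). Its proof only uses that \(\mathsf C\) is skew-symmetric, so \(\mathrm{div}(\mathsf C X)=0\). Hence
\[
    \int_{\partial\Omega} f\, d\mu_g=\int_{\partial\Omega}\langle \mathsf C X,\nu\rangle\, d\mu_g=0 .
\]

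Third, conclude. \(f\) is continuous and nonnegative, and its integral against the smooth area measure \(d\mu_g\) vanishes. Since \(d\mu_g\) charges every nonempty open subset of \(\partial\Omega\), \(f\) must vanish identically. Indeed, if \(f(x_0)>0\) at some point, then \(f>f(x_0)/2\) on a neighborhood of \(x_0\), and the integral would be positive. Thus \(\langle(\mathsf A-\mathsf B)X,\nu\rangle=0\) on \(\partial\Omega\), which means \((\mathsf A-\mathsf B)X\) is tangential to \(\partial\Omega\).

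There is no real obstacle in this argument. The only points needing care are that the hypothesis is understood pointwise on the whole boundary, and that \(\partial\Omega\) is a smooth closed hypersurface bounding \(\Omega\), so the divergence theorem underlying \Cref{lem:K-divergence-free} applies. By symmetry the same argument gives \(\widehat u_{\mathsf A}=\widehat u_{\mathsf B}\) identically, so no strict pointwise ordering between two spiral support functions is possible.
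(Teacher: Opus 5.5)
Your proof is correct and is essentially the paper's argument. The paper subtracts the two instances of $\int_{\partial\Omega}\widehat u\,d\mu_g=(n+1)|\Omega|$ from \Cref{lem:K-divergence-free}, while you apply the vanishing identity directly to the skew-symmetric map $\mathsf A-\mathsf B$; the two steps amount to the same thing, and your concluding step (a nonnegative continuous function with zero integral vanishes) is the paper's, written out in more detail.
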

\begin{proof}
    By~\Cref{lem:K-divergence-free}, we have
    \begin{align*}
        \int_{\partial \Omega}\widehat u_{\mathsf A}-\widehat u_{\mathsf B}d\mu_g=0.
    \end{align*}
    By the assumption \( \widehat u_{\mathsf A}-\widehat u_{\mathsf B}\ge 0 \), 
    this implies pointwisely
    \begin{align*}
       \widehat u_{\mathsf A}-\widehat u_{\mathsf B}= \langle(\mathsf A-\mathsf B)X, \nu \rangle =0,
    \end{align*}
    due to continuity of the support functions.
\end{proof}
\section{Spiral radial parametrization}
\label{sec:spiral-radial-parametrization}
In this section, we introduce the spiral radial parametrization.

\subsection{Rotational Killing vector field}
\label{subsec:spiral-Killing-field}

We first clarify our notations.
Let $\mathsf A\in\mathfrak{so}(n+1)$, and define the ambient
rotational Killing field
\[
  \mathcal K(x):=\mathsf A x,
  \qquad x\in\mathbb R^{n+1}.
\]
For $\theta\in\mathbb S^n$, its restriction to the unit sphere is
\[
  \mathcal K(\theta)=\mathsf A\theta
  \in T_\theta\mathbb S^n.
\]
Let $\{e_i\}_{i=1}^n$ be a local orthonormal frame on
$\mathbb S^n$. We use the component notation
\[
  \mathcal K_i
  :=
  \langle\mathcal K(\theta),e_i\rangle_\sigma
  =
  \langle\mathsf A\theta,e_i\rangle.
\]
We also set
\[
  \mathsf A_{ij}
  :=
  \langle\mathsf A e_i,e_j\rangle.
\]
Since $\mathsf A$ is skew-symmetric,
\[
  \mathsf A_{ij}=-\mathsf A_{ji}.
\]

The ambient and intrinsic derivatives of $\mathcal K$ are related by
\[
  \overline\nabla_{e_i}\mathcal K
  =
  \mathsf A e_i,
\]
and
\[
  \nabla_i^\sigma\mathcal K
  =
  \bigl(\mathsf A e_i\bigr)^\top
  =
  \mathsf A e_i+\mathcal K_i\theta,
\]
where we have used skew-symmetry of \( \mathsf A \).
In components, this gives
\[
  \nabla_i^\sigma\mathcal K_j
  =
  \mathsf A_{ij}.
\]

For $\gamma\in C^\infty(\mathbb S^n)$, write
\[
  D\gamma=\gamma^j e_j.
\]
Then
\[
  (\mathsf A D\gamma)_i
  :=
  \langle\mathsf A(D\gamma),e_i\rangle
  =
  \mathsf A_{ji}\gamma^j
  =
  -\mathsf A_{ij}\gamma^j.
\]
Finally, the directional derivative of $\gamma$ along $\mathcal K$ is
\[
  \mathcal K\gamma
  :=
  d\gamma(\mathcal K)
  =
  \langle D\gamma,\mathcal K\rangle_\sigma
  =
  \mathcal K^i\gamma_i.
\]

\subsection{Spiral radial parametrization}
\label{subsec:spiral-radial-parametrization}

Now consider the geodesic normal coordinates of $\mathbb R^{n+1}$
\[
ds^2 = d\rho^2+\rho^2 \sigma,
\]
where $\sigma=d\theta^2 $ is the standard metric of $\mathbb S^n$. We let
 \[
\rho = e^\gamma,\qquad \gamma =\ln \rho.
\]
By direct calculations, we can check that for a fixed $\theta\in \mathbb S^n$
\[
    X=X(s):=e^s\,e^{s\mathsf A }\theta
\]
is an integral curve of the conformal Killing vector field
\[
Y:=X+\mathcal K,
\]
where $e^{s\mathsf A }\, \theta\in \mathbb S^n$ is a one-parameter family of rotations on $\theta$.

Thus, we could parametrize the hypersurface by the following spiral parametrization
\begin{align}
X = \rho \,e^{\gamma \mathsf A}\theta.
    \label{X}
\end{align}
where $\rho=\rho(\theta)$ and $\gamma =\ln\rho(\theta)$ for $\theta\in \mathbb S^n$ if
$M_0$ is spiral-star-shaped.
In particular, for any fixed $\theta$, $X(\cdot,\theta)$
is an integral curve of the conformal Killing field $Y$.

\subsection{Geometric quantities under spiral parametrization}
\label{subsec:geometric-quantities-spiral}
Choose a local orthonormal frame $\{e_i\}$ at a point of the spiral-radial
parametrization \(X = \rho \,e^{\gamma \mathsf A}\theta\). Then
\[
\nabla ^{\sigma}_{e_i}e_j = 0,\quad \overline \nabla _{e_i}e_j=-\sigma_{ij}\theta.
\]
Recall \(e^{\gamma\mathsf A}\) is a linear operator and
\[
    \frac{d}{d \gamma }e^{\gamma\mathsf A}
    = e^{\gamma\mathsf A}\, \mathsf A.
\]

The tangent vector fields
\begin{align*}
    X_i &= \overline \nabla _{e_i}X \\
        &=\gamma _i \rho\, e^{\gamma\mathsf A} \theta
        +\gamma _i\rho\, e^{\gamma\mathsf A} \mathsf A\theta
        +\rho\, e^{\gamma\mathsf A} \overline \nabla _{e_i}\theta\\
        &= \rho\, e^{\gamma\mathsf A}
    \Big(\gamma_i(\mathcal K+\theta)+e_i\Big),
\end{align*}
Thus, the induced metric
\[
g_{ij} = e^{2\gamma} \Big(\sigma_{ij} +\gamma_i\mathcal K_j + \gamma_j\mathcal K_i+\gamma_i \gamma _j( |\mathcal K|^2+1)\Big),
\]
and the inverse metric tensor
\[
g^{ij} = e^{-2\gamma}
\Big(
\sigma^{ij} +\mathcal K^i\mathcal K^j-
\frac{(\gamma ^i+\alpha \mathcal K^i )(\gamma ^j+\alpha \mathcal K^j )}{\omega^2}\Big),
\]
where $\omega := \sqrt{\alpha^2+|D \gamma |^2}$ and \(\alpha:=1+\mathcal K \gamma \).

The unit normal vector field is
\[
    \nu =\frac{1}{\omega}
    e^{\gamma \,\mathsf A}(\alpha\,\theta-D \gamma).
\]
 Consequently,
 \begin{equation}
     \label{eq:U-and-u}
     \widehat u  = \frac{\rho}{\omega},\quad
     \mathrm{and}\quad
u= \alpha\widehat u .
 \end{equation}

Moreover,
\begin{align*}
\overline \nabla _{e_j}X_i
=&\overline \nabla _{e_j}
 \rho\, e^{\gamma\mathsf A}
    \Big(\gamma_i(\mathcal K+\theta)+e_i\Big)\\
    =\rho  e^{\gamma\mathsf A}
    \Big[
 & \gamma _i \gamma_j(\mathcal K+\theta)+\gamma _je_i \\
 &+ \gamma _i \gamma_j \mathsf A(\mathcal K+\theta)+\gamma _j\mathsf A e_i \\
     &+ \gamma_{ij}
    (\mathcal K+\theta)
     + \gamma_{i}
    \overline \nabla _{e_j}(\mathcal K+\theta)
     + \overline\nabla _{e_j}e_i
\Big].
\end{align*}
Thus,
\begin{align*}
\overline \nabla _{e_j}X_i
=\rho  e^{\gamma\mathsf A}
    \Big[
 & \gamma _i \gamma_j(2\mathcal K+\theta)
+ \gamma _i \gamma_j \mathsf A\mathcal K
 \\
 &+\gamma _j(e_i +\mathsf A e_i)
+ \gamma_{i}(e_j+\mathsf A e_j )
 \\
 &
     + \gamma_{ij}
    (\mathcal K+\theta)
  -\sigma_{ij} \theta
\Big].
\end{align*}
We rewrite the unit normal vector field
\[
   -\frac{\omega}{\rho} \nu
   = \frac{1}{\rho}e^{\gamma \,\mathsf A}(\alpha\theta -D \gamma).
\]
We use the following definition of second fundamental form
\[
h_{ij} =- \langle \overline \nabla _{e_j}\overline \nabla _{e_i}X, \nu\rangle
\]
and let
\[
h_{ij} :=\frac{\rho}{\omega}\mathfrak h_{ij}.
\]

Since \(e^{\gamma \mathsf A}\) is an isometry and
\begin{align*}
    &\langle \mathcal K,\theta\rangle =\langle\mathsf A \theta,\theta\rangle= 0, \quad
    \langle \theta,\theta\rangle =1 , \quad
 \langle e_i,\theta\rangle =0,\quad \\
    &\langle\mathsf A \mathcal K,\theta\rangle=-\langle \mathcal K,\mathsf A\theta\rangle= -|\mathcal K|^2_{\sigma}, \quad
\langle\mathsf A e_i,\theta\rangle=-\langle e_i,\mathsf A \theta\rangle =-\mathcal K_i, \quad
    \\
    &\langle \mathcal K,D \gamma \rangle =\langle \mathsf A \theta,D \gamma \rangle =\mathcal K \gamma ,\quad
      \langle e_i,D \gamma \rangle =\gamma_i ,\quad
\langle \theta,D \gamma \rangle =0,\quad
\quad\\
    &\langle\mathsf A \mathcal K,D \gamma   \rangle
    =
    -\langle\mathcal K,\mathsf A D \gamma   \rangle , \quad
\langle\mathsf A e_i,D \gamma \rangle= -\langle e_i,\mathsf A D \gamma \rangle
= -(\mathsf A D \gamma )_i, \quad
    \\
\end{align*}
 we obtain
\begin{align*}
    -\mathfrak h_{ij}
=\langle \Big[
 & \gamma _i \gamma_j(2\mathcal K+\theta)
+ \gamma _i \gamma_j \mathsf A\mathcal K
 \\
 &+\gamma _j(e_i+\mathsf A e_i)
+\gamma _i(e_j+\mathsf A e_j)
 \\
 &
     + \gamma_{ij}
    (\mathcal K+\theta)
    -\sigma_{ij} \theta
 \Big], \alpha\theta -D \gamma\rangle \\
    =&
    \alpha
    \Big[
    \gamma _i \gamma _j(1-|\mathcal K|^2_{\sigma})
    -\gamma _j \mathcal K_i -\gamma _i \mathcal K_j
    + \gamma _{ij}
    - \sigma_{ij}
\Big]
\\
     &
     +\gamma _i \gamma _j(-2\mathcal K \gamma+\langle \mathcal K, \mathsf A D \gamma \rangle )\\
     &-2\gamma _i \gamma _j
     +\gamma _j(\mathsf A D\gamma)_i
     +\gamma _i(\mathsf A D\gamma)_j
     \\
     &-\gamma _{ij}\mathcal K \gamma.\\
\end{align*}
Thus,
\begin{align*}
    -\mathfrak h_{ij}
        =&\gamma _{ij} -\alpha \sigma_{ij}\\
     & +\gamma _i \gamma _j(-\alpha(1+|\mathcal K|^2_{\sigma})
     +\langle \mathcal K, \mathsf A D \gamma \rangle )\\
     &-\alpha(\gamma _i\mathcal K_j+\gamma _j\mathcal K_i)
     +\gamma _j(\mathsf A D\gamma)_i
     +\gamma _i(\mathsf A D\gamma)_j.
\end{align*}
We can calculate the second fundamental form and the Weingarten tensor under
the spiral radial parametrization.
\begin{lemma}
    \label{lem:h-and-W} The second fundamental form and Weingarten tensor satisfy
\begin{align*}
    h_{ij} =& \frac{\rho}{\omega}\mathfrak h_{ij}
    =\frac{\rho}{\omega}
    \Big\{
        -\gamma _{ij}
             +\gamma _i \gamma _j
        \Big[
            \alpha(1+|\mathcal K|^2_{\sigma})
            -\langle \mathcal K, \mathsf A D \gamma \rangle )
        \Big]\\
     &\qquad+\alpha(\gamma _i\mathcal K_j+\gamma _j\mathcal K_i)
     -\gamma _j(\mathsf A D\gamma)_i
     -\gamma _i(\mathsf A D\gamma)_j\\
            &\qquad+\alpha \sigma_{ij}
    \Big\}
    \\
    h_{i}{}^j =&    \frac{1}{\rho\omega}
\Big(
\sigma^{jl} +\mathcal K^j\mathcal K^l-
\frac{(\gamma ^j+\alpha \mathcal K^j )(\gamma ^l+\alpha \mathcal K^l )}{\omega^2}\Big)
               \cdot \mathfrak h_{il}
\end{align*}
where $\omega := \sqrt{\alpha^2+|D \gamma |^2}$, \(\alpha:=1+\mathcal K \gamma \)
and \( \mathcal K \gamma= \langle \mathcal K, D \gamma \rangle_{\sigma}  \).
\end{lemma}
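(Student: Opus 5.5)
The lemma packages the computation carried out just before it, so the proof has two parts. First I will confirm the formula for $h_{ij}$ by checking the expansion of $-\mathfrak h_{ij}$. Then I will derive $h_i{}^j$ by raising an index with the inverse metric $g^{ij}$ recorded earlier.

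\textbf{The formula for $h_{ij}$.} By definition $h_{ij}=-\langle\overline\nabla_{e_j}X_i,\nu\rangle$, with
\[
\nu=\frac1\omega e^{\gamma\mathsf A}(\alpha\theta-D\gamma).
\]
Since $e^{\gamma\mathsf A}$ is orthogonal, the factor $e^{\gamma\mathsf A}$ cancels between $\overline\nabla_{e_j}X_i=\rho e^{\gamma\mathsf A}[\cdots]$ and $\nu$. This gives $h_{ij}=\frac{\rho}{\omega}\mathfrak h_{ij}$ with
\[
-\mathfrak h_{ij}=\langle[\cdots],\alpha\theta-D\gamma\rangle .
\]
To evaluate this inner product I will pair each term in the bracket with $\theta$ and with $D\gamma$, using the listed identities:
\[
\langle\mathcal K,\theta\rangle=0,\qquad \langle\mathsf A\mathcal K,\theta\rangle=-|\mathcal K|^2,\qquad \langle\mathsf Ae_i,\theta\rangle=-\mathcal K_i,\qquad \langle\mathsf Ae_i,D\gamma\rangle=-(\mathsf AD\gamma)_i,
\]
and so on. The contributions then regroup as follows.

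The $\gamma_{ij}$ terms give $\gamma_{ij}(\alpha-\mathcal K\gamma)=\gamma_{ij}$, because $\alpha=1+\mathcal K\gamma$.

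The $\sigma_{ij}$ term gives $-\alpha\sigma_{ij}$.

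The $\gamma_i\gamma_j$ terms collect as
\[
\alpha(1-|\mathcal K|^2)-2\mathcal K\gamma-2+\langle\mathcal K,\mathsf AD\gamma\rangle .
\]
Substituting $-2\mathcal K\gamma-2=-2\alpha$, this becomes $-\alpha(1+|\mathcal K|^2)+\langle\mathcal K,\mathsf AD\gamma\rangle$.

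The mixed terms give
\[
-\alpha(\gamma_i\mathcal K_j+\gamma_j\mathcal K_i)+\gamma_j(\mathsf AD\gamma)_i+\gamma_i(\mathsf AD\gamma)_j .
\]

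Negating all four groups yields the stated expression for $h_{ij}$.

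\textbf{The formula for $h_i{}^j$.} By definition $h_i{}^j=g^{jl}h_{il}$. Substituting the inverse metric
\[
g^{jl}=e^{-2\gamma}\Bigl(\sigma^{jl}+\mathcal K^j\mathcal K^l-\frac{(\gamma^j+\alpha\mathcal K^j)(\gamma^l+\alpha\mathcal K^l)}{\omega^2}\Bigr)
\]
and $h_{il}=\frac{\rho}{\omega}\mathfrak h_{il}$, the prefactor becomes $e^{-2\gamma}\rho/\omega=1/(\rho\omega)$, since $\rho=e^\gamma$. This is exactly the claimed formula.

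\textbf{Main obstacle.} The one step that needs independent confirmation is the inverse metric, which the paper stated without proof. The metric is
\[
g=e^{2\gamma}\bigl(\sigma+\gamma\otimes\mathcal K+\mathcal K\otimes\gamma+(1+|\mathcal K|^2)\gamma\otimes\gamma\bigr).
\]
I would verify $g_{ij}g^{jk}=\delta_i^k$ by direct contraction. The contraction involves the quantities $\mathcal K\gamma$, $|\mathcal K|^2$ and $|D\gamma|^2$, and it uses the identity
\[
\omega^2=\alpha^2+|D\gamma|^2 .
\]
The bookkeeping is cleanest in the two-dimensional span of $D\gamma$ and $\mathcal K$, where both matrices act nontrivially; on the orthogonal complement both reduce to $\sigma$ up to the scaling $e^{\pm2\gamma}$. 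This contraction is the most error-prone part of the argument.
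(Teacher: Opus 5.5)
Your proposal is correct and is essentially the paper's own argument: pair the bracket in $\overline\nabla_{e_j}X_i=\rho e^{\gamma\mathsf A}[\cdots]$ against $\alpha\theta-D\gamma$ using the listed inner-product identities (your regrouping, including $-2\mathcal K\gamma-2=-2\alpha$, matches), then raise the index with $g^{jl}$ and use $e^{-2\gamma}\rho=1/\rho$. The inverse-metric check you flag, which the paper states without proof, does go through: in the product of $g_{ij}$ with the claimed $g^{jl}$, the coefficients of the four rank-one terms built from $D\gamma$ and $\mathcal K$ each vanish once you use $\alpha=1+\mathcal K\gamma$ and $\omega^2=\alpha^2+|D\gamma|^2$.
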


\section{Initial Value Problem and \texorpdfstring{\(  C^0 \)}{C0} estimate}
\label{sec:ivp-C0}

Similar as in~\cite{GuanLi2026}, we derive the initial value problem for \( \gamma  \)
under a time-independent spiral parametrization
\( X= \rho e^{\gamma \mathsf A}\theta\)
where \( \theta \) is independent of \( t \).

Along flow
\[
\partial _t X=f\nu,
\]
direct calculations using \( \gamma _t = \partial _t \gamma (\theta, t) \) yield
\begin{align*}
    \partial _t X = \gamma _t (X+\mathcal K).
\end{align*}
Thus,
\begin{align*}
   \langle \partial _t X,\nu \rangle
     = \gamma _t \widehat u \quad
     \mathrm{and}\quad
\langle \partial _t X,\nu \rangle = f.
\end{align*}
We have
\[
\gamma _t = \frac{f}{\widehat u }.
\]

Therefore, the flow~\eqref{eflow} is equivalent to the initial-value problem
\begin{equation}
\begin{aligned}
  \partial_t\gamma
  &=\frac{1}{\widehat u  F}-\frac{1}{c_{n,k}\alpha},
\end{aligned}
\label{eq:gamma-IVP}
\end{equation}
where we have used~\eqref{eq:U-and-u}.

We next show that the radial function $\rho$ is uniformly bounded from
above and below.

\begin{proposition}
\label{prop:C0-rho}
Let $M_0$ be a radial graph with radial function $\rho_0$ over
$\mathbb S^n$ in $\mathbb R^{n+1}$. If $\gamma (\theta,t)$ solves
\eqref{eq:gamma-IVP}, then
\begin{equation}
  \min_{\mathbb S^n}\rho_0
  \le \rho(z,t)
  \le \max_{\mathbb S^n}\rho_0
  \qquad
  \text{for every }(z,t)\in\mathbb S^n\times[0,T).
  \label{eq:C0-rho-bound}
\end{equation}
\end{proposition}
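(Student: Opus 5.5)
Because the bound \eqref{eq:C0-rho-bound} is equivalent to $\min\gamma(\cdot,0)\le\gamma\le\max\gamma(\cdot,0)$ with $\gamma=\ln\rho$, the plan is a parabolic maximum principle applied to the scalar equation \eqref{eq:gamma-IVP}. (Here $M_0$ is understood as a spiral graph $X=\rho_0e^{\gamma_0\mathsf A}\theta$ over $\mathbb S^n$, and $\rho(z,t)=e^{\gamma(z,t)}$.) Fix $T'<T$ and let $(\theta_0,t_0)$ be a point where $\gamma$ attains its maximum over $\mathbb S^n\times[0,T']$. If $t_0>0$, then at that point
\[
D\gamma=0,\qquad D^2\gamma\le 0,\qquad \partial_t\gamma\ge 0.
\]
The aim is to show that the right-hand side of \eqref{eq:gamma-IVP} is nonpositive there, which forces the maximum to be attained at $t=0$. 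To get a strict inequality, one can either perturb to $\gamma-\varepsilon t$ or argue directly via $\max\gamma$ being nonincreasing, which holds in the viscosity or Hamilton sense.

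The main step is to evaluate the geometry at a critical point of $\gamma$ using \Cref{lem:h-and-W}. Since $D\gamma=0$, we have $\mathcal K\gamma=0$, so $\alpha=1$, $\omega=1$, and $\widehat u=\rho$, $u=\rho$ by \eqref{eq:U-and-u}. All the terms in $\mathfrak h_{ij}$ containing $\gamma_i$ vanish, leaving $\mathfrak h_{ij}=\sigma_{ij}-\gamma_{ij}$. The inverse metric reduces to $g^{ij}=\rho^{-2}(\sigma^{ij}+\mathcal K^i\mathcal K^j-\mathcal K^i\mathcal K^j)=\rho^{-2}\sigma^{ij}$, because $\gamma^i+\alpha\mathcal K^i=\mathcal K^i$ and $\omega=1$. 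Hence
\[
h_i{}^j=\rho^{-1}(\delta_i^j-\gamma_i{}^j)\ \ge\ \rho^{-1}\delta_i^j
\]
as matrices, because $D^2\gamma\le0$. This is exactly the expression from the radial case, since the rotational terms drop out at critical points.

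Next I use that $F=\sigma_{k+1}/\sigma_k$ is monotone increasing on $\Gamma_{k+1}$ (which contains $\Gamma_{k+1}+\Gamma_+$). This gives $F(h)\ge F(\rho^{-1}I)=\rho^{-1}c_{n,k}$, where
\[
\frac{\sigma_{k+1}(I)}{\sigma_k(I)}=\frac{\binom{n}{k+1}}{\binom nk}=\frac{n-k}{k+1}=c_{n,k}.
\]
Therefore
\[
\partial_t\gamma=\frac{1}{\rho F}-\frac1{c_{n,k}}\le\frac{1}{c_{n,k}}-\frac1{c_{n,k}}=0
\]
at the maximum point. The minimum is handled symmetrically: there $D^2\gamma\ge0$, so $h\le\rho^{-1}I$ and $F\le c_{n,k}/\rho$. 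This uses admissibility of $h$, since $F$ is evaluated on the cone along the whole segment between $h$ and $\rho^{-1}I$, and the cone is convex. It follows that $\partial_t\gamma\ge0$ there. Combining the two gives the claimed two-sided bound for $\rho=e^\gamma$.

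The main obstacle I expect is the reduction at the critical point, namely verifying that $g^{ij}$ and $\mathfrak h_{ij}$ really collapse to the radial form. It must be checked that terms such as $\mathcal K^i\mathcal K^j$ cancel exactly, and that the matrix $\gamma_{ij}$ is the Hessian with respect to $\sigma$, so that its sign at an extremum is available. A secondary point is making the weak inequality rigorous: I would use the comparison with the constant solutions $\gamma\equiv\max\gamma_0$ and $\gamma\equiv\min\gamma_0$, which solve \eqref{eq:gamma-IVP} because spheres centered at the origin are stationary (there $F=c_{n,k}/\rho$ and $\widehat u=\rho$). I would then apply the standard comparison principle for this parabolic equation.
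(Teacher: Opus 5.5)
Your proposal is correct and matches the paper's proof essentially step for step. At a spatial critical point of $\gamma$ one has $\alpha=\omega=1$, $\widehat u=\rho$ and $h_i{}^j=\rho^{-1}(\delta_i^j-\gamma_i{}^j)$; monotonicity and homogeneity of $F$ then give $\partial_t\gamma\le 0$ at a maximum and $\partial_t\gamma\ge 0$ at a minimum, and the maximum principle concludes. The details you add that the paper leaves implicit are all sound: the collapse $g^{ij}=\rho^{-2}\sigma^{ij}$, the use of cone convexity in the minimum case, and the $\gamma-\varepsilon t$ perturbation that makes the weak inequality rigorous.
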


\begin{proof}
Since $\dd\gamma/\dd\rho=1/\rho>0$, the functions $\gamma$ and $\rho$
have the same spatial maximum and minimum points. At any spatial
critical point of $\gamma$,
\[
  \nabla\gamma=0,
  \qquad
  \alpha=1,
  \qquad
  \omega=1,
  \qquad
  \widehat u  = \rho.
\]
It follows from~\Cref{lem:h-and-W} that
\[
  h_i{}^j
  =\frac{1}{\rho}
  (
  1 -\gamma _{i}{}^j
  )
  .
\]

At a spatial maximum of $\gamma$, we have
$\gamma_i{}^j\le0$, and hence
\[
  h_i{}^j\ge \frac{1}{\rho}\delta_i{}^j.
\]
By the ellipticity, monotonicity, and homogeneity of $F$,
\[
  F(h_i{}^j)
  \ge \frac{1}{\rho}F(I)
  =\frac{c_{n,k}}{\rho}.
\]
Equation \eqref{eq:gamma-IVP} therefore gives
\[
  \partial_t\gamma
=\frac{1}{\widehat u  F}-\frac{1}{c_{n,k}\alpha}
=\frac{1}{\rho F}-\frac{1}{c_{n,k}}
  \le0
\]
at a spatial maximum.

Similarly, at a spatial minimum of $\gamma$, we have
\[
  \partial_t\gamma\ge0.
\]
The parabolic maximum principle now gives
\[
  \min_{\mathbb S^n}\gamma(\cdot,0)
  \le\gamma(z,t)
  \le\max_{\mathbb S^n}\gamma(\cdot,0).
\]
Since $\gamma$ is strictly increasing as a function of $\rho$, this is
equivalent to \eqref{eq:C0-rho-bound}.
\end{proof}

\section{Gradient estimates}
\label{sec:gradient-estimate}

\subsection{Identities for \texorpdfstring{\( \widehat u  \)}{rotational u } }
\label{subsec:identities-U}

Recall \( \widehat u  =u+\mathring u\).
We first derive the Hessian of \(\mathring u\),
rotational part of the support function.

Under an orthonormal frame \( \{e_i\} \) with respect to the induced metric
\( g_{ij} \) of the hypersurface \( M_0 \) at a given point.
We calculate
\begin{align*}
    \nabla _i \mathring u =&
    \nabla _i \langle \mathcal K, \nu\rangle
    =\nabla _i \langle \mathsf A X, \nu\rangle\\
    =&
    \langle \mathsf A X_i, \nu \rangle
    +
    h_i{}^l\langle \mathsf A X, X _l \rangle.
\end{align*}
Take one more derivative, we obtain the Hessian equation.
\begin{align*}
    \nabla _j\nabla _i \mathring u =&
    \langle \mathsf A \overline\nabla _j X_i, \nu \rangle
    +h_j{}^l\langle \mathsf A X_i, X_l \rangle\\
                                    &
+ \nabla _jh_i{}^l\langle \mathsf A X, X _l \rangle
+h_i{}^l\langle \mathsf A X_j, X _l \rangle
+h_i{}^l\langle \mathsf A X,\overline\nabla _j X _l \rangle
    .
\end{align*}

We first note that
\[
    \langle \mathsf A \overline\nabla _j X_i, \nu \rangle=
\langle \mathsf A \nabla^{g} _j X_i, \nu \rangle
+h_{ij}\langle \mathsf A \nu, \nu \rangle
    =0.
\]
while \( \nabla^{g} _j X_i=0 \) because of the orthonormal frame and \(\langle\mathsf A\nu,\nu \rangle =0  \) because the skew-symmetry of \( \mathsf A \). Similarly, we have
\[
h_i{}^l\langle \mathsf A X,\overline\nabla _j X _l \rangle
=-h_i{}^lh_{jl}\langle \mathsf A X,\nu\rangle
=-h_i{}^lh_{jl}\mathring u.
\]

Thus, we obtain
\begin{align*}
    \nabla _j\nabla _i \mathring u =&
 \nabla _jh_i{}^l\langle \mathcal K, X _l \rangle
-h_i{}^lh_{jl}\mathring u\\
                                    &+h_i{}^l\langle \mathsf A X_j, X _l \rangle
+h_j{}^l\langle \mathsf A X_i, X_l \rangle.
\end{align*}
Hessian equation of the radial part of the suport function \( u \) is well-known,
\[
\nabla _j\nabla _i u =
 \nabla _jh_i{}^l\langle X, X _l \rangle
-h_i{}^lh_{jl}u
+h_{ij}.
\]

Put these two parts together, we obtain
\begin{align*}
    \nabla _j\nabla _i \widehat u  =&
 \nabla _jh_i{}^l\langle Y, X _l \rangle
-(h^2)_{ij}\widehat u
+h_{ij}\\
                                    &+h_i{}^l\langle \mathsf A X_j, X _l \rangle
+h_j{}^l\langle \mathsf A X_i, X_l \rangle
    .
\end{align*}
Using the Codazzi property, we get
\begin{align*}
    \nabla _j\nabla _i \widehat u  =&
 \langle  \nabla h_{ij},Y^\top\rangle
-(h^2)_{ij}\widehat u
+h_{ij}\\
                                    &+h_i{}^l\langle \mathsf A X_j, X _l \rangle
+h_j{}^l\langle \mathsf A X_i, X_l \rangle
    .
\end{align*}

Moreover, using the skew-symmetry of \( \mathsf A \),
after we contract the Hessian with \( g^{ij} \) or \( F^{ij} \),
the last two terms in the Hessian will vanish.
Consequently, we have the following identities.

\begin{lemma}
    \label{lem:spiral-u-identities}
The following identities hold.
\begin{equation}
\begin{aligned}
    F^{ij} \nabla _j \nabla _i \widehat u  =
 &\langle  \nabla F,Y^\top\rangle
-\dot F(h^2)\widehat u
+F\\
   \Delta  \widehat u  =
 &\langle  \nabla H,Y^\top\rangle
-|A|^2\widehat u
+H.
\end{aligned}
\end{equation}
\end{lemma}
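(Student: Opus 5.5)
We start from the Hessian identity for \(\widehat u\) derived just above the statement, in a local orthonormal frame for \(g\):
\begin{align*}
    \nabla_j\nabla_i\widehat u
    =\langle\nabla h_{ij},Y^\top\rangle-(h^2)_{ij}\widehat u+h_{ij}
    +h_i{}^l\langle\mathsf A X_j,X_l\rangle+h_j{}^l\langle\mathsf A X_i,X_l\rangle .
\end{align*}
We contract it with \(F^{ij}=\partial F/\partial h_{ij}\), and then repeat the argument with \(g^{ij}\) for the Laplacian. The plan has three steps: handle the three geometric terms, show the two rotational terms vanish, and then specialise to \(g^{ij}\).

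\textbf{First three terms.} By the chain rule, \(F^{ij}\nabla_l h_{ij}=\nabla_l F\), so the first term becomes \(\langle\nabla F,Y^\top\rangle\). The second term is \(F^{ij}(h^2)_{ij}\widehat u=\dot F(h^2)\widehat u\), with \(\dot F(h^2)=F^{ij}h_{il}h_{lj}\). Since \(F=\sigma_{k+1}/\sigma_k\) is homogeneous of degree one, Euler's relation gives \(F^{ij}h_{ij}=F\).

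\textbf{Rotational terms.} Set \(a_{lj}:=\langle\mathsf A X_j,X_l\rangle\). This matrix is skew-symmetric because \(\mathsf A\) is. The two rotational terms contract to
\[
F^{ij}h_i{}^l a_{lj}+F^{ij}h_j{}^l a_{li}=2\,\mathrm{tr}(\dot F\,h\,a),
\]
after relabelling indices. Here \(\dot F=(F^{ij})\) and \(h\) are symmetric, and they commute: \(F\) is a symmetric function of the eigenvalues, so at a point we may choose the frame diagonalising \(h\), and then \(F^{ij}\) is diagonal as well. Hence \(\dot F h\) is symmetric, and the trace of a symmetric matrix times a skew matrix is zero. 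This commutation is the one point that needs checking. In the diagonal frame it is immediate: the trace is \(\sum_i f_i\kappa_i a_{ii}=0\).

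\textbf{Laplacian.} For the second identity we take \(F^{ij}\) replaced by \(g^{ij}=\delta^{ij}\). Then \(g^{ij}\nabla h_{ij}=\nabla H\), \(g^{ij}(h^2)_{ij}=|A|^2\), and \(g^{ij}h_{ij}=H\). The rotational terms reduce to \(2\,\mathrm{tr}(h\,a)=0\), since \(h\) is symmetric and \(a\) is skew.
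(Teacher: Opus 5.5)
Your proposal is correct and follows the paper's route: the paper also obtains the lemma by contracting the Codazzi-rewritten Hessian of $\widehat u$ with $F^{ij}$ or $g^{ij}$, and then discarding the two rotational terms. The paper attributes that last vanishing only to the skew-symmetry of $\mathsf A$, but skew-symmetry alone is not enough; your point that $F^{ij}$ must also commute with $h_{ij}$, so that the contraction equals $2\,\mathrm{tr}(\dot F h a)=\sum_i f_i\kappa_i a_{ii}=0$, supplies the justification the paper leaves implicit.
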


\subsection{Gradient estimates }
\label{subsec:evolution-U}

By direct calculation, we have the following lemma.
\begin{lemma}
    \label{lem:evolution-U-general}
    Along the following flow,
    \begin{equation}
        \label{general-flow}
        \partial _t X = f\nu,
    \end{equation}
we have
    \begin{equation}
        \label{eq:evolution-U-general}
        \partial _t \widehat u  =
        f- \langle Y^{\top}, \nabla f \rangle.
    \end{equation}
\end{lemma}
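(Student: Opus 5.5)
The computation is pointwise, so we fix a point and a local orthonormal frame $\{e_i\}$ for the induced metric at time $t$. We write $\widehat u=\langle Y,\nu\rangle$ with $Y=X+\mathsf A X$ and differentiate in $t$ by the product rule:
\[
\partial_t\widehat u=\langle \partial_t Y,\nu\rangle+\langle Y,\partial_t\nu\rangle .
\]
Two inputs are needed. The first is the evolution of the position. Since $Y$ is linear in $X$, we have $\partial_t Y=(I+\mathsf A)\partial_t X=f(\nu+\mathsf A\nu)$. The second is the standard evolution of the unit normal along a normal variation, $\partial_t\nu=-\nabla f=-g^{ij}\nabla_i f\,X_j$. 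This follows by differentiating $\langle\nu,\nu\rangle=1$ and $\langle \nu,X_i\rangle=0$ in time, which gives $\langle\partial_t\nu,X_i\rangle=-\langle\nu,\partial_i(f\nu)\rangle=-\nabla_i f$. It holds for any hypersurface flow and does not depend on the spiral parametrization used in \Cref{sec:ivp-C0}. If the flow is instead represented with tangential reparametrizations, $\widehat u$ is still a geometric quantity, so we compute in the normal parametrization.

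We then evaluate the two terms. For the first,
\[
\langle \partial_tY,\nu\rangle=f\langle\nu,\nu\rangle+f\langle\mathsf A\nu,\nu\rangle=f,
\]
because $\langle\mathsf A\nu,\nu\rangle=0$ by skew-symmetry of $\mathsf A$. This is the same cancellation already used in \Cref{subsec:identities-U}, and it is the only point where the rotational part enters. For the second,
\[
\langle Y,\partial_t\nu\rangle=-\langle Y,\nabla f\rangle=-\langle Y^\top,\nabla f\rangle,
\]
since $\nabla f$ is tangential. Adding the two gives \eqref{eq:evolution-U-general}.

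The only step that needs any care is making sure the skew-symmetric contribution $f\langle \mathsf A\nu,\nu\rangle$ vanishes. Everything else reproduces the classical identity $\partial_t u=f-\langle X^\top,\nabla f\rangle$. As a consistency check, the same computation with $\mathsf A=0$ gives exactly that classical formula. With $f=\frac1F-\frac{u}{c_{n,k}}$, this lemma will later be combined with \Cref{lem:spiral-u-identities} to produce a parabolic equation for $\widehat u$.
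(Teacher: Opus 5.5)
Your proposal is correct and is exactly the ``direct calculation'' the paper invokes without writing it out. It uses the product rule on $\langle Y,\nu\rangle$, the normal-variation identity $\partial_t\nu=-\nabla f$, and the vanishing of $f\langle\mathsf A\nu,\nu\rangle$ by skew-symmetry. You are also right to insist on the normal parametrization, which is the one implicitly used later in the operator $\mathcal L$.
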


The following simple identity plays a crucial role in the evolution of \( \widehat u  \).
\begin{lemma}
    \label{lem:spiral-support}
    The rotational part support function satisfies
    \begin{equation}
        \label{eq:spiral-u-identity-1}
        \mathring u = \langle \mathcal K,\nu\rangle
        =X^{\top}(\widehat u ) -Y^{\top}(u).
    \end{equation}
    Equivalently,
    \begin{equation}
        \label{eq:spiral-u-identity-2}
        Y^{\top}(u)-u =X^{\top}(\widehat u ) -\widehat u.
    \end{equation}
\end{lemma}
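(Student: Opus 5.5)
The plan is a direct first-order computation: express both directional derivatives on the right-hand side through the gradient formulas for the support functions, and observe that the curvature terms cancel by symmetry of \(h\), leaving only a term coming from \(\mathsf A\).

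\textbf{Gradient of \(\widehat u\).} In a local orthonormal frame on \(M\), the standard computation gives \(\nabla_i u = h_i{}^l\langle X, X_l\rangle\). In \Cref{subsec:identities-U} we already derived
\[
  \nabla_i\mathring u=\langle \mathsf A X_i,\nu\rangle+h_i{}^l\langle \mathsf A X,X_l\rangle .
\]
Adding the two, and using \(Y=X+\mathsf A X\), gives
\[
  \nabla_i\widehat u=h_i{}^l\langle Y,X_l\rangle+\langle \mathsf A X_i,\nu\rangle .
\]

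\textbf{Contracting with the tangential fields.} Contract \(\nabla u\) with \(Y^\top\) and \(\nabla\widehat u\) with \(X^\top\):
\[
  Y^\top(u)=h(Y^\top,X^\top),\qquad
  X^\top(\widehat u)=h(X^\top,Y^\top)+\langle \mathsf A X^\top,\nu\rangle .
\]
Since \(h\) is symmetric, subtracting leaves
\[
  X^\top(\widehat u)-Y^\top(u)=\langle \mathsf A X^\top,\nu\rangle .
\]

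\textbf{Identifying the remaining term.} Decompose \(X=X^\top+u\nu\). Then \(\mathsf A X^\top=\mathsf A X-u\,\mathsf A\nu\), and \(\langle\mathsf A\nu,\nu\rangle=0\) by skew-symmetry of \(\mathsf A\). Hence
\[
  \langle \mathsf A X^\top,\nu\rangle=\langle \mathsf A X,\nu\rangle=\mathring u ,
\]
which is \eqref{eq:spiral-u-identity-1}.

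\textbf{The equivalent form.} Substitute \(\mathring u=\widehat u-u\) from \Cref{def:spiral-support} into \eqref{eq:spiral-u-identity-1} and rearrange. This gives \(Y^\top(u)-u=X^\top(\widehat u)-\widehat u\), which is \eqref{eq:spiral-u-identity-2}.

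There is no real obstacle here. The only point needing care is keeping track of which tangential field is paired with which gradient, so that the two Weingarten terms are seen to be the same quantity \(h(X^\top,Y^\top)\) and cancel.
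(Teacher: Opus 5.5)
Your proof is correct and matches the paper's argument: both write $\nabla\widehat u$ and $\nabla u$ through the Weingarten map, contract them with $X^\top$ and $Y^\top$ respectively, and cancel the two curvature terms by the symmetry of $h$. You also spell out the step $\langle\mathsf A X^\top,\nu\rangle=\langle\mathsf A X,\nu\rangle$, using $X=X^\top+u\nu$ and $\langle\mathsf A\nu,\nu\rangle=0$, which the paper uses without comment.
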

\begin{proof}
   For simplicity, we let
   \begin{align*}
       X^{\top}=x_i X_i,\qquad Y^{\top}=y_iX_i.
   \end{align*}
We also have \( \nabla \widehat u  =\nabla _j \widehat u \, X_j \)
and \( \nabla u =\nabla _j u \, X_j  \).
   Thus
   \begin{align*}
       X^{\top}(\widehat u ) = x_i \nabla _i\widehat u,\quad
       Y^{\top}(u) = y_i \nabla _iu.
   \end{align*}
We calculate
   \begin{align*}
       \nabla_i \widehat u  =&
       \langle \mathsf A X_i, \nu\rangle + \langle Y,\nabla _i \nu\rangle
       = \langle \mathsf A X_i, \nu\rangle +y_j \langle X_j,\nabla _i \nu\rangle\\
       \nabla_i u =&
        \langle X,\nabla _i \nu\rangle
       = x_j \langle X_j,\nabla _i \nu\rangle.
   \end{align*}
   Consequently, we have
   \begin{align*}
       X^{\top}(\widehat u ) -Y^{\top}(u)=&
x_i\langle \mathsf A X_i, \nu\rangle
+x_iy_j h_{ji}
-y_ix_j  h_{ji}\\
       =&\langle \mathsf A X, \nu\rangle,
   \end{align*}
  where we have used that the second fundamental form is symmetric in the last step.
  This complets the proof since \( \mathcal K = \mathsf A X \).
\end{proof}

We let
\begin{equation}
    \label{eq:L}
    \mathcal L:=
        \partial _t
    - \frac{1}{F^2}F^{ij} \nabla _j \nabla _i
    -\frac{1}{c_{n,k}}\langle X^{\top},\nabla \rangle.
\end{equation}
We obtain the following lemma which implies the uniform $C^1$ bound.
\begin{lemma}
    \label{lem:evolution-U}
    Along the inverse curvature flow~\eqref{eflow}, we have
    \begin{equation}
        \label{eq:evolution-U}
        \mathcal L \widehat u
    =
\left(
    \frac{\dot F(h^2)}{F^2} - \frac{1}{c_{n,k}}
    \right)
\widehat u .
    \end{equation}
    Moreover, let $M_t\subset\mathbb R^{n+1}$ be a smooth admissible solution
    of \eqref{eflow} for \( t\in[0,T) \).
Then
\[
  \widehat u_t \ge \min_{M_0} \widehat u_0.
\]
\end{lemma}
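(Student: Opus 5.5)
The plan is to verify the evolution identity \eqref{eq:evolution-U} directly from \Cref{lem:evolution-U-general}, \Cref{lem:spiral-u-identities} and \Cref{lem:spiral-support}. Then a pointwise Newton--Maclaurin type inequality for \(F=\sigma_{k+1}/\sigma_k\) will show that the zeroth-order coefficient is nonnegative, and the maximum principle gives the lower bound.

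\textbf{Step 1 (the evolution identity).} The flow \eqref{eflow} has speed \(f=\frac1F-\frac{u}{c_{n,k}}\). By \Cref{lem:evolution-U-general},
\[
\partial_t\widehat u=\frac1F-\frac{u}{c_{n,k}}+\frac{\langle Y^\top,\nabla F\rangle}{F^2}+\frac{Y^\top(u)}{c_{n,k}} .
\]
By the first identity of \Cref{lem:spiral-u-identities},
\[
\langle\nabla F,Y^\top\rangle=F^{ij}\nabla_j\nabla_i\widehat u+\dot F(h^2)\widehat u-F .
\]
Dividing by \(F^2\), the term \(-1/F\) cancels the \(1/F\) in the speed. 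The remaining first-order terms combine to \(\frac{1}{c_{n,k}}\bigl(Y^\top(u)-u\bigr)\). By \eqref{eq:spiral-u-identity-2} this equals \(\frac{1}{c_{n,k}}\bigl(X^\top(\widehat u)-\widehat u\bigr)\). Moving \(\frac1{F^2}F^{ij}\nabla_j\nabla_i\widehat u\) and \(\frac1{c_{n,k}}\langle X^\top,\nabla\widehat u\rangle\) to the left-hand side gives exactly
\[
\mathcal L\widehat u=\Bigl(\frac{\dot F(h^2)}{F^2}-\frac1{c_{n,k}}\Bigr)\widehat u .
\]
This step is pure bookkeeping. The key point is that \Cref{lem:spiral-support} converts the awkward term \(Y^\top(u)\) into a gradient of \(\widehat u\) itself.

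\textbf{Step 2 (sign of the coefficient).} This is the main obstacle. I will show that
\[
\dot F(h^2)=\sum_i F^{ii}\kappa_i^2\ \ge\ \frac{F^2}{c_{n,k}}
\qquad\text{for }\kappa\in\Gamma_{k+1},
\]
with equality on spheres. Using \(\sum_i\sigma_{m}^{ii}\kappa_i^2=\sigma_1\sigma_m-(m+1)\sigma_{m+1}\) for \(m=k,k+1\), I expect a direct computation to give
\[
\sum_iF^{ii}\kappa_i^2=(k+1)\frac{\sigma_{k+1}^2}{\sigma_k^2}-(k+2)\frac{\sigma_{k+2}}{\sigma_k}.
\]
The desired inequality is then equivalent to
\[
(k+2)\frac{\sigma_{k+2}}{\sigma_k}\le\frac{(k+1)(n-k-1)}{n-k}F^2 .
\]
If \(\sigma_{k+2}\le0\), this is trivial. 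Otherwise it follows from the generalized Newton--Maclaurin inequality
\[
\frac{\sigma_{k+2}}{\sigma_{k+1}}\le\frac{(k+1)(n-k-1)}{(k+2)(n-k)}\,\frac{\sigma_{k+1}}{\sigma_k},
\]
which is valid for \(\kappa\in\Gamma_{k+1}\) and is the same estimate used in the star-shaped case.

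\textbf{Step 3 (maximum principle).} Admissibility gives \(F>0\) along the flow, and \(\widehat u>0\) initially. At a spatial minimum point of \(\widehat u\) where \(\widehat u>0\), we have \(\nabla\widehat u=0\) and \(F^{ij}\nabla_j\nabla_i\widehat u\ge0\) by ellipticity. Step 2 then shows the right-hand side is \(\ge0\), so \(\partial_t\widehat u\ge0\) there. A standard argument, for instance applied to \(\widehat u+\varepsilon t\), or a first-time argument showing \(\widehat u\) never reaches \(\min_{M_0}\widehat u_0-\varepsilon\), yields \(\widehat u\ge\min_{M_0}\widehat u_0\) for all \(t\in[0,T)\). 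In particular, spiral-star-shapedness is preserved.
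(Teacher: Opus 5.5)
Your proposal is correct and follows the paper's proof. Step 1 is exactly the paper's computation combining \Cref{lem:evolution-U-general}, \Cref{lem:spiral-u-identities} and \eqref{eq:spiral-u-identity-2}, and Step 3 is the same maximum-principle argument. The only difference is in Step 2: you derive $\dot F(h^2)\ge F^2/c_{n,k}$ directly from Newton's inequality $\sigma_k\sigma_{k+2}\le\frac{(k+1)(n-k-1)}{(k+2)(n-k)}\sigma_{k+1}^2$, whereas the paper simply cites Lemma~4.1 of \cite{GuanLi2026}. Your derivation is correct, and by requiring $\widehat u>0$ at the minimum point you make explicit a positivity hypothesis that the paper leaves implicit.
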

\begin{proof}
   We have
   \[
   \partial _t \widehat u  =
   \frac{1}{F} -\frac{u}{c_{n,k}}
   -\langle Y^{\top}, \nabla
   \left(
\frac{1}{F} -\frac{u}{c_{n,k}}
       \right)
   \rangle
   \]
  and
  \[
     F^{ij} \nabla _j \nabla _i \widehat u  =
 \langle  \nabla F,Y^\top\rangle
-\dot F(h^2)\widehat u
+F.
  \]
 This implies
 \begin{align*}
    \partial _t \widehat u
    - \frac{1}{F^2}F^{ij} \nabla _j \nabla _i \widehat u  =&
\frac{\dot F(h^2)}{F^2}\widehat u
+\frac{1}{c_{n,k}}
\left(
    \langle Y^{\top},\nabla u \rangle -u.
    \right)
 \end{align*}
 By~\Cref{lem:spiral-support},
 \[
 \langle Y^{\top},\nabla u \rangle -u =X^{\top}(\widehat u )-\widehat u,
 \]
 we finish the proof of the evolution equation~\eqref{eq:evolution-U}.
 The uniform lower bound is by standard maximum principle and
\[
\frac{\dot F(h^2)}{F^2} \ge \frac{1}{c_{n,k}},
\]
which can be found in Lemma~4.1 from \cite{GuanLi2026}.
\end{proof}

With the \( C^0 \) estimate from~\Cref{prop:C0-rho},
uniform lower bound of \( \widehat u  \), and~\eqref{eq:U-and-u},
we obtain
\begin{proposition}
\label{prop:C1-rho}
Let $M_0$ be a radial graph with radial function $\rho_0$ over
$\mathbb S^n$ in $\mathbb R^{n+1}$. If $\gamma (\theta,t)$ solves
\eqref{eq:gamma-IVP}, then
\begin{equation}
  \sup_{(\theta,t)\in\mathbb S^n\times [0,T)}|\nabla \rho|(\theta,t)
  \le \max_{\theta\in \mathbb S^n} |\nabla \rho|(\theta,0).
  \label{eq:C1-rho-bound}
\end{equation}
\end{proposition}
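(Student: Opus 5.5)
The plan is to prove \eqref{eq:C1-rho-bound} by a direct maximum principle argument on the scalar equation \eqref{eq:gamma-IVP}, applied to the quantity
\[
  \varphi:=\tfrac12|D\rho|^2_\sigma=\tfrac12 e^{2\gamma}|D\gamma|^2_\sigma ,
\]
rather than by combining the lower bound of \Cref{lem:evolution-U} with \eqref{eq:U-and-u}. That combination only gives a bound in terms of $\max\rho_0$ and $\min\widehat u_0$, and so cannot produce the sharp constant $\max|\nabla\rho|(\cdot,0)$. The goal is to show that at a spatial maximum of $\varphi$ we have $\partial_t\varphi\le0$. The claimed bound then follows exactly as in the proof of \Cref{prop:C0-rho}.

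\textbf{Step 1: the scalar operator.} First I write \eqref{eq:gamma-IVP} as $\partial_t\gamma=G(D^2\gamma,D\gamma,\theta)$. Here $G=\frac{\omega}{\rho F(h_i{}^j)}-\frac{1}{c_{n,k}\alpha}$, with $h_i{}^j$ given by \Cref{lem:h-and-W}. The $\rho$ factors cancel, since $h_i{}^j\rho$ depends only on $D\gamma$, $D^2\gamma$ and $\theta$, and $\widehat u=\rho/\omega$. So $G$ has no explicit $\gamma$-dependence apart from the harmless factor $e^{-\gamma}$ in $1/(\widehat uF)$, which scales out. Along admissible solutions the linearization $G^{ij}=\partial G/\partial\gamma_{ij}$ is positive definite, by ellipticity of $F$ on $\Gamma_{k+1}$.

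\textbf{Step 2: differentiate and evaluate at a maximum.} Next I compute
\[
  \partial_t\varphi=e^{2\gamma}|D\gamma|^2\partial_t\gamma+e^{2\gamma}\gamma^k\partial_t\gamma_k .
\]
I substitute $\partial_t\gamma_k=G^{ij}\gamma_{ijk}+G^{p_l}\gamma_{lk}+G_{\theta_k}$. At a spatial maximum of $\varphi$ I choose a frame with $D\gamma=|D\gamma|e_1$. There $D\varphi=0$ gives $\gamma_{1k}=-|D\gamma|\,\delta_{1k}\cdot|D\gamma|$, that is $\gamma_{11}=-|D\gamma|^2$ and $\gamma_{1k}=0$ for $k\ge2$. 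Moreover $D^2\varphi\le0$. I commute derivatives on $\mathbb S^n$, using $\gamma_{ijk}=\gamma_{kij}+\sigma$-curvature terms, and then use $G^{ij}\varphi_{ij}\le0$. This leaves
\[
  \partial_t\varphi\le e^{2\gamma}|D\gamma|^2\Big(G-G^{ij}\sigma_{ij}-G^{ij}\gamma_{ki}\gamma_{kj}/|D\gamma|^2+\ldots\Big)+(\text{first-order terms in }G_p,\ G_\theta).
\]
In the radial case $\mathsf A=0$, the $1$-homogeneity structure of $F$ (Euler's identity, as in the proof of \Cref{prop:C0-rho}, where $h_i{}^j=\rho^{-1}(\delta-\gamma_i{}^j)$) makes the bracket nonpositive. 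This is the classical Gerhardt--Urbas computation, which gives monotonicity of $\max|D\rho|$.

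\textbf{Step 3 (main obstacle): the rotational terms.} The new terms come from $\alpha=1+\mathcal K\gamma$, from $\mathcal K_i$, and from $(\mathsf AD\gamma)_i$ in \Cref{lem:h-and-W}. The $\theta$-derivatives $G_{\theta_k}$ involve $\nabla^\sigma\mathcal K_j=\mathsf A_{ij}$. My first attempt will exploit the skew-symmetry $\mathsf A_{ij}=-\mathsf A_{ji}$. In the frame $D\gamma=|D\gamma|e_1$ the contraction $\gamma^k\mathsf A_{kl}\gamma^l$ vanishes, and $\gamma^k(\mathsf AD\gamma)_k=0$. I expect these cancellations to kill the rotational contributions to $\gamma^kG_{\theta_k}$, in the same way that the cross terms vanished when $F^{ij}$ was contracted in \Cref{lem:spiral-support}. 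The remaining terms, which carry $\alpha$ and $\mathcal K\gamma$, I will try to absorb by using $\gamma_{1k}=-|D\gamma|^2\delta_{1k}$ and by rewriting $G$ through the geometric identity of \Cref{lem:evolution-U}, namely $\mathcal L\widehat u\ge0$ with $\widehat u=\rho/\omega$. The aim is to reduce the sign condition to $\dot F(h^2)/F^2\ge 1/c_{n,k}$. If some terms involving $\mathcal K$ survive without a definite sign, that is exactly where the sharp constant would be lost.
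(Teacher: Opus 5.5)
You are right that the paper's own argument does not give the sharp constant. The paper combines three facts: $\widehat u\ge\min_{M_0}\widehat u$ from \Cref{lem:evolution-U}, $\rho\le\max\rho_0$ from \Cref{prop:C0-rho}, and $\widehat u=\rho/\omega$ from \eqref{eq:U-and-u}. Together these give $|D\gamma|\le\omega\le\max\rho_0/\min_{M_0}\widehat u$, hence $|D\rho|=\rho|D\gamma|\le(\max\rho_0)^2/\min_{M_0}\widehat u$. This is a uniform bound depending on the initial data, and it is all that the later sections use.

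Your replacement has a genuine gap. The whole proof rests on the sign claims of Steps 2--3, and these are only announced (``I expect'', ``I will try''), not proved. Even for $\mathsf A=0$ the needed sign is not the Gerhardt--Urbas computation, which controls $|D\gamma|$. The weight $e^{2\gamma}$ produces the term $2\varphi\,\partial_t\gamma$, which is \emph{positive} at a maximum of $\varphi$: for $n=1$, $\mathsf A=0$ it equals $2\varphi\,p^4/(1+2p^2)$ with $p=\gamma'$. Only the $-u/c_{n,k}$ term of the flow can beat it.

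More seriously, for $\mathsf A\neq0$ the sign you want is false, so no refinement of Step 3 can succeed. Take $n=1$, $k=0$, and $\mathsf A=aJ$ with $J$ the rotation by $\pi/2$. Then $\mathcal K\gamma=a\gamma'$, and the scalar equation has no explicit $\theta$-dependence. By \eqref{eq:U-and-u}, $u/\widehat u=\alpha$, so the zeroth-order term of the $\gamma$-equation is $-\alpha/c_{n,k}$, not $-1/(c_{n,k}\alpha)$ as written in \eqref{eq:gamma-IVP} and your Step 1. Also \Cref{lem:h-and-W} gives $\kappa=(\alpha\omega^2-\gamma'')/(\rho\omega^3)$. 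Hence
\[
  \partial_t\gamma=G(\gamma',\gamma''),\qquad
  G(p,q)=\frac{\omega^4}{\alpha\omega^2-q}-\alpha,\qquad
  \alpha=1+ap,\quad \omega^2=\alpha^2+p^2 .
\]
At a maximum of $\varphi=\tfrac12\rho'^2$ with $\rho'\neq0$ and $\varphi''=0$, one has $\gamma''=-p^2$ and $p\gamma'''=2p^4$. Therefore
\[
  e^{-2\gamma}\partial_t\varphi=p^2G-p^3G_p+2p^4G_q=-2ap^5+O(p^6).
\]
For $a=0$ the same expression is $-(3p^6+2p^8)/(1+2p^2)^2\le0$. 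So take a nearly round convex (even star-shaped) curve whose $|\rho'|$ has a degenerate maximum at a point where $a\rho'<0$. For it, $\max|\rho'|$ strictly increases for small $t>0$.

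The rotational $\mathcal K\gamma$-terms you worried about really do survive with the wrong sign, so \eqref{eq:C1-rho-bound} with the constant $\max|\nabla\rho|(\cdot,0)$ cannot be obtained. What is provable, and sufficient for uniform parabolicity and the curvature estimates, is the paper's non-sharp bound through $\widehat u$.
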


\subsection{Lower bound for \texorpdfstring{\( F \)}{F}}
\label{subsec:F}

To obtain a lower bound for \( F \),
we follow the similar techniques as in~\cite{GuanLi2026}.
For simplicity, we let
\begin{align*}
    b_{i}{}^j =&
\Big(
\omega^2(\sigma^{jl} +\mathcal K^j\mathcal K^l)-
{(\gamma ^j+\alpha \mathcal K^j )(\gamma ^l+\alpha \mathcal K^l )}
\Big)\\
               &\quad
              \cdot \Big(
                   -\gamma _{ij}
             +\gamma _i \gamma _j
        \big[
            \alpha(1+|\mathcal K|^2_{\sigma})
            -\langle \mathcal K, \mathsf A D \gamma \rangle )
        \big]\\
     &\qquad+\alpha(\gamma _i\mathcal K_j+\gamma _j\mathcal K_i)
     -\gamma _j(\mathsf A D\gamma)_i
     -\gamma _i(\mathsf A D\gamma)_j\\
            &\qquad+\alpha \sigma_{ij}
    \Big),
\end{align*}
and
\begin{align*}
      \widetilde F
  :=F(b_i{}^j)
  =\frac{\sigma_{k+1}(b_i{}^j)}
         {\sigma_k(b_i{}^j)}.
\end{align*}
Then,
\begin{align*}
    h_i{}^j = \frac{1}{\rho \omega^3} b_i{}^j
    \quad \mathrm{and}\quad
    \widetilde F =\rho \omega^3 F.
\end{align*}
\begin{proposition}
\label{prop:F-lower-bound}
Let $M_t\subset\mathbb R^{n+1}$ be a smooth admissible solution of \eqref{eflow}.
Then there exists a constant $C_0>0$, depending only on the initial
hypersurface and the gradient bound, such that
\[
  F\ge C_0
\]
on $\mathbb S^n\times[0,T)$.
\end{proposition}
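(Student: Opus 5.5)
We will prove the lower bound by a maximum-principle argument applied to the speed function
\[
  \Phi:=\frac{1}{F}-\frac{u}{c_{n,k}},
\]
or equivalently to \(\partial_t\gamma\). This follows the structure of the corresponding argument in~\cite{GuanLi2026}. The aim is to bound \(1/F\) from above.

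\textbf{Evolution of the speed.} First we derive the evolution equation of \(\Phi\) along~\eqref{eflow}. Since \(\partial_t X=\Phi\nu\), the standard formula gives
\[
  \partial_t h_i{}^j=-\nabla_i\nabla^j\Phi-\Phi (h^2)_i{}^j .
\]
Hence
\[
  \partial_t F=-F^{ij}\nabla_i\nabla_j\Phi-\Phi\dot F(h^2),
\qquad
  \partial_t(1/F)=\frac{1}{F^2}F^{ij}\nabla_i\nabla_j\Phi+\frac{\Phi\dot F(h^2)}{F^2}.
\]
The support function satisfies \(\partial_t u=\Phi-\langle X^\top,\nabla\Phi\rangle\), and its Hessian identity contracted with \(F^{ij}\) is
\[
  F^{ij}\nabla_{ij}u=\langle\nabla F,X^\top\rangle-\dot F(h^2)u+F .
\]
Combining these, we expect \(\Phi\) to satisfy a closed linear equation of the form
\[
  \mathcal L\Phi=\Big(\frac{\dot F(h^2)}{F^2}-\frac{1}{c_{n,k}}\Big)\Phi ,
\]
with the operator \(\mathcal L\) of~\eqref{eq:L}. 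This is the same structure as~\eqref{eq:evolution-U}. It should hold because \(X\) is conformal Killing and the \(X^\top\) drift in \(\mathcal L\) comes exactly from the \(-u/c_{n,k}\) term. Note that \(\mathsf A\) does not enter this computation at all, since the flow itself contains no rotational term.

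\textbf{Comparison with \(\widehat u\).} Next we consider the quotient \(\Psi:=\Phi/\widehat u\). Both \(\Phi\) and \(\widehat u\) satisfy the same linear equation \(\mathcal L w=c(x,t)\,w\) with the same coefficient \(c\), and \(\widehat u>0\) by \Cref{lem:evolution-U}. Therefore \(\Psi\) satisfies
\[
  \mathcal L\Psi=\frac{2}{F^2\widehat u}F^{ij}\nabla_i\widehat u\,\nabla_j\Psi ,
\]
which has no zeroth-order term. The maximum principle then gives \(\max\Psi(t)\le\max\Psi(0)=:C_1\). Observe that \(\Psi\) is exactly \(\partial_t\gamma\) by the derivation of~\eqref{eq:gamma-IVP}, which is a good consistency check.

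\textbf{Conclusion.} From \(\Psi\le C_1\) we get
\[
  \frac1F\le C_1\widehat u+\frac{u}{c_{n,k}} .
\]
By \Cref{prop:C0-rho}, \Cref{prop:C1-rho} and~\eqref{eq:U-and-u}, both \(u=\alpha\widehat u\) and \(\widehat u=\rho/\omega\) are bounded above, since \(\rho\), \(|D\gamma|\) and \(|\mathcal K|\le|\mathsf A|\) are bounded. Hence \(F\ge C_0>0\), with \(C_0\) depending only on \(M_0\) and the gradient bound.

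\textbf{Main obstacle.} The main risk is the first step: verifying that the zeroth-order coefficients in the evolution of \(\Phi\) match those of \(\widehat u\) exactly. This requires carefully tracking the terms \(\Phi\dot F(h^2)/F^2\) and \(-\Phi/c_{n,k}\).

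If the match fails, we will instead work in the scalar parametrization with \(\widetilde F=\rho\omega^3F\). We will differentiate~\eqref{eq:gamma-IVP} in \(t\) and apply the maximum principle directly to \(\partial_t\gamma\). The terms \(1/(c_{n,k}\alpha)\) and \(\widehat u\) are already controlled by the \(C^1\) bound, so it remains to show that \(\partial_t\gamma\) obeys a linear parabolic equation with no zeroth-order term.
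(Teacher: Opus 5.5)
Your argument is correct, and the obstacle you flag does not arise. From your own formulas,
\[
\partial_t\Phi=\frac{1}{F^2}F^{ij}\nabla_i\nabla_j\Phi+\frac{\dot F(h^2)}{F^2}\,\Phi-\frac{1}{c_{n,k}}\bigl(\Phi-\langle X^\top,\nabla\Phi\rangle\bigr),
\]
which is exactly $\mathcal L\Phi=\bigl(\dot F(h^2)/F^2-1/c_{n,k}\bigr)\Phi$. This is the same coefficient as in \eqref{eq:evolution-U}, and the Hessian identity for $u$ is not even needed. Conceptually, time translations and the dilation--rotation group generated by $Y$ are both symmetries of \eqref{eflow}, so $\Phi$ and $\widehat u$ both solve its linearization, and their quotient satisfies a drift-only equation.

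The quantity you bound, $\Phi/\widehat u$, is the paper's $\partial_t\gamma$, so the estimate itself is the same. The derivation differs. The paper stays in the spiral chart: it differentiates \eqref{eq:gamma-IVP} in $t$ at a spatial critical point of $\partial_t\gamma$. There $\partial_t\alpha$ and $\partial_t\omega$ vanish and only the term $\widetilde F^{ij}(\cdots)(\partial_t\gamma)_{i\ell}$ survives. This is precisely your fallback.

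Your geometric route is coordinate-free and makes the final conversion chart-independent. Since $|u|\le|X|=\rho$ and $\widehat u\le|Y|\le(1+\|\mathsf A\|)\rho$, your constant depends only on $\max_{M_0}\Phi/\widehat u$, $\max\rho_0$ and $\|\mathsf A\|$; no gradient bound is needed. The paper's route works directly with the scalar equation that is later used for parabolic regularity. However, it needs the $C^1$ bound to control $\alpha$ and $\omega$ when passing from $\widetilde F=\rho\omega^3F$ back to $F$. Note that $u/\widehat u=\alpha$, so the zeroth-order term in \eqref{eq:gamma-IVP} is really $\alpha/c_{n,k}$, which the gradient bound controls. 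A $1/\alpha$ term could not be controlled once $u$ changes sign.

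One small correction: your stated reason that $\widehat u=\rho/\omega$ is bounded above is not right as written. A bound on $|D\gamma|$ bounds $\omega$ from above, not from below. Use $\widehat u\le|Y|$ instead, or the elementary bound $\omega^2\ge(1+|\mathcal K|^2)^{-1}$.
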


\begin{proof}
Set
\[
  \dot\gamma:=\partial_t\gamma.
\]
By \eqref{eq:gamma-IVP},
\begin{equation}
\gamma_t
=  \partial_t\gamma
=\frac{\omega^4}{\widetilde F}-\frac{1}{c_{n,k}\alpha}.
  \label{eq:dotgamma}
\end{equation}
We perform the following computation at a spatial critical point $P$
of $\dot\gamma$. At $P$,
\[
  \nabla\dot\gamma=0,
  \qquad
  \partial _t \alpha = \langle \mathcal K, D \dot \gamma \rangle=0,
  \qquad
  \partial_t\omega^2
  = 2 \alpha \alpha_t
  +2\inner{\nabla\gamma}{\nabla\dot\gamma}=0.
\]
Differentiating \eqref{eq:dotgamma} therefore gives
\begin{equation}
  \partial_t\dot\gamma
  =
   -\frac{\omega^4}{\widetilde F^2}
    \partial_t\widetilde F.
  \label{eq:dotgamma-time}
\end{equation}
At $P$, \begin{equation}
\begin{aligned}
  \partial_t\widetilde F
  &=-\widetilde F^{ij}
\Big(
\omega^2(\sigma^{jl} +\mathcal K^j\mathcal K^l)-
{(\gamma ^j+\alpha \mathcal K^j )(\gamma ^l+\alpha \mathcal K^l )}
\Big)
    (\dot\gamma)_{i\ell} .
\end{aligned}
\label{eq:Ftilde-time}
\end{equation}
Combining \eqref{eq:dotgamma-time} and
\eqref{eq:Ftilde-time}, we obtain at $P$
\begin{align}
  &\partial_t\dot\gamma
  -\frac{\omega^4}{\widetilde F^2}
   \widetilde F^{ij}
\Big(
\omega^2(\sigma^{jl} +\mathcal K^j\mathcal K^l)-
{(\gamma ^j+\alpha \mathcal K^j )(\gamma ^l+\alpha \mathcal K^l )}
\Big)
(\dot\gamma)_{i\ell}=0.
  \label{eq:dotgamma-parabolic}
\end{align}
Maximum principle implies that
\[
\frac{\omega^4}{\widetilde F }
=\gamma _t+\frac{1}{c_{n,k}\alpha}
\le C,
\]
hence
\[
\widetilde F\ge \frac{\omega^4}{C}\ge c\quad \mathrm{and}\quad F=\frac{\widetilde F}{\rho \omega^3}\ge c'
\]
which completes the proof.
\end{proof}

\section{Curvature estimates}
\label{sec:curvature-estimate}

In this section we derive the curvature estimates required for the
long-time existence of the flow. Since the geometric evolution equation
is the same as the one considered in~\cite{GuanLi2026, GuanLi2021}, the evolution equations
for the geometric quantities can be taken directly from there. The new
point here is that the $C^0$ and $C^1$ estimates obtained in the previous
sections allow us to apply these estimates to the spiral-star-shaped setting.

Recall that
\[
    F=\frac{\sigma_{k+1}}{\sigma_k},
    \qquad
    c_{n,k}=\frac{n-k}{k+1}.
\]

\begin{proposition}
\label{prop:evolution-h}
\cite{GuanLi2026}
Along the flow~\eqref{eflow} in $\mathbb R^{n+1}$, the Weingarten
tensor satisfies
\begin{align}
\mathcal L h_i{}^j
  &=
  \frac{1}{F^2}\ddot F
  \big(
    \nabla_i h,\nabla^j h
  \big)
  -\frac{2}{F^3}\nabla_iF\nabla^jF
  -\frac{2}{F}(h^2)_i{}^j
  \notag\\
  &\quad
  +\left[
     \frac{\dot F(h^2)}{F^2}
     +\frac{1}{c_{n,k}}
   \right]h_i{}^j ,
  \label{eq:evolution-h}
\end{align}
where $\mathcal L$ is defined in~\eqref{eq:L}.
\end{proposition}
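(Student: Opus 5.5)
We need to derive the evolution of the Weingarten tensor along $\partial_t X=f\nu$ with $f=\frac1F-\frac{u}{c_{n,k}}$, and then rewrite it with the operator $\mathcal L$ of~\eqref{eq:L}. The plan is in three steps. First, use the standard evolution along a normal flow $\partial_t X=f\nu$ in Euclidean space, with the sign convention $h_{ij}=-\langle\overline\nabla_j X_i,\nu\rangle$:
\[
\partial_t h_i{}^j=-\nabla_i\nabla^j f-f\,(h^2)_i{}^j .
\]
We split $f=f_1+f_2$ with $f_1=1/F$ and $f_2=-u/c_{n,k}$, and treat the two pieces separately.

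Second, handle the $f_1$ part. We have
\[
-\nabla_i\nabla^j\Big(\frac1F\Big)=\frac{1}{F^2}\nabla_i\nabla^jF-\frac{2}{F^3}\nabla_iF\nabla^jF .
\]
To express $\nabla_i\nabla^jF=F^{kl}\nabla_i\nabla^jh_{kl}+\ddot F(\nabla_i h,\nabla^j h)$ through $F^{kl}\nabla_k\nabla_l h_i{}^j$, we commute derivatives using Codazzi and the Gauss equation in flat space (Simons' identity):
\[
F^{kl}\nabla_i\nabla_j h_{kl}=F^{kl}\nabla_k\nabla_l h_{ij}+F^{kl}h_{kl}(h^2)_{ij}-F^{kl}(h^2)_{kl}h_{ij}.
\]
By homogeneity $F^{kl}h_{kl}=F$, so this becomes $F^{kl}\nabla_k\nabla_lh_{ij}+F(h^2)_{ij}-\dot F(h^2)h_{ij}$. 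After multiplying by $1/F^2$ and adding $-f_1(h^2)=-\frac1F h^2$, the contribution of $f_1$ is
\[
\frac{1}{F^2}F^{kl}\nabla_k\nabla_lh_i{}^j+\frac{1}{F^2}\ddot F(\nabla_ih,\nabla^jh)-\frac{2}{F^3}\nabla_iF\nabla^jF-\frac{\dot F(h^2)}{F^2}h_i{}^j .
\]

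Third, handle the $f_2$ part, using the Hessian identity for $u$ recorded before \Cref{lem:spiral-u-identities}:
\[
\nabla_i\nabla^j u=\langle\nabla h_i{}^j,X^\top\rangle-u(h^2)_i{}^j+h_i{}^j .
\]
Then
\[
-\nabla_i\nabla^j f_2-f_2h^2=\frac1{c_{n,k}}\Big(\langle X^\top,\nabla h_i{}^j\rangle-u\,h^2+h_i{}^j\Big)+\frac{u}{c_{n,k}}h^2=\frac1{c_{n,k}}\langle X^\top,\nabla h_i{}^j\rangle+\frac1{c_{n,k}}h_i{}^j .
\]
The $u\,h^2$ terms cancel, and the transport term is exactly the one that is absorbed into $\mathcal L$.

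Summing the two parts and moving the second-order and transport terms into $\mathcal L$ gives
\[
\mathcal L h_i{}^j=\frac1{F^2}\ddot F(\nabla_ih,\nabla^jh)-\frac2{F^3}\nabla_iF\nabla^jF-\frac{\dot F(h^2)}{F^2}h_i{}^j+\frac1{c_{n,k}}h_i{}^j .
\]
This does not yet match~\eqref{eq:evolution-h}: the stated formula has an extra $-\frac{2}{F}(h^2)_i{}^j$ and the opposite sign on the $\frac{\dot F(h^2)}{F^2}h_i{}^j$ term. So the main obstacle is reconciling signs and conventions. It is the sign conventions for $\partial_t h$ (covariant versus mixed indices, orientation of $\nu$) and the Simons commutator that produce these discrepancies. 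In particular, $\partial_t g_{ij}=2fh_{ij}$ contributes terms of the form $\pm 2f h^2$, and these differ between $h_{ij}$ and $h_i{}^j$.

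I would first recompute $\partial_t h_i{}^j$ carefully, checking it against the sphere solution, where every term must balance to zero at the stationary radius (here $f=0$ on the sphere $\rho=1$), and then fix the signs against the corresponding formula in~\cite{GuanLi2026}. Since the geometric flow is identical to that of~\cite{GuanLi2026}, the final identity can in any case be cited from there once the conventions are matched.
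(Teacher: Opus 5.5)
Your argument has a genuine error, but it is not a matter of conventions. Your first step, $\partial_t h_i{}^j=-\nabla_i\nabla^j f-f(h^2)_i{}^j$, is correct for the paper's conventions (outward $\nu$, $h_{ij}=-\langle\overline\nabla_jX_i,\nu\rangle$, $\nabla_i\nu=h_i{}^lX_l$). Your computation of the $f_2=-u/c_{n,k}$ part is also correct. The mistake is the sign of the curvature terms in your commutation formula. The Ricci identity, together with Codazzi and the Gauss equation, gives
\[
\nabla_k\nabla_l h_{ij}=\nabla_i\nabla_j h_{kl}+h_{kl}(h^2)_{ij}-h_{ij}(h^2)_{kl}+h_{kj}(h^2)_{il}-h_{il}(h^2)_{kj},
\]
whose trace is the classical Simons identity $\Delta h_{ij}=\nabla_i\nabla_jH+H(h^2)_{ij}-|A|^2h_{ij}$. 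Contract with $F^{kl}$; the last two terms cancel since $F^{kl}$ and $h$ are simultaneously diagonalizable. Using $F^{kl}h_{kl}=F$, you get
\[
F^{kl}\nabla_i\nabla_j h_{kl}=F^{kl}\nabla_k\nabla_l h_{ij}-F\,(h^2)_{ij}+\dot F(h^2)\,h_{ij},
\]
which is the opposite of what you wrote. With this sign, the $-\frac1F(h^2)_i{}^j$ produced by the commutator adds to the term $-f_1(h^2)_i{}^j$ instead of cancelling it. The $f_1$-contribution then becomes
\[
\frac{1}{F^2}F^{kl}\nabla_k\nabla_lh_i{}^j+\frac{1}{F^2}\ddot F(\nabla_ih,\nabla^jh)-\frac{2}{F^3}\nabla_iF\nabla^jF-\frac{2}{F}(h^2)_i{}^j+\frac{\dot F(h^2)}{F^2}h_i{}^j .
\]
Add your $f_2$-contribution $\frac{1}{c_{n,k}}\langle X^\top,\nabla h_i{}^j\rangle+\frac{1}{c_{n,k}}h_i{}^j$ and absorb the second-order and transport terms into $\mathcal L$; this yields exactly \eqref{eq:evolution-h}. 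So the statement is right as written. Once corrected, your computation is a complete self-contained proof; the paper gives none and simply cites \cite{GuanLi2026}.

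The sanity check you propose could not have caught this. The difference between your formula and the correct one is $-\frac{2}{F}(h^2)_i{}^j+\frac{2\dot F(h^2)}{F^2}h_i{}^j$. It vanishes at every umbilic point: if $h=\lambda I$, then $F=c_{n,k}\lambda$ and $\dot F(h^2)=c_{n,k}\lambda^2$. It also vanishes after contraction with $F^{ij}$. So neither the stationary spheres nor the evolution of $F$ in \Cref{prop:F-upper-bound} distinguishes the two formulas. Two checks do work. The case $F^{kl}=g^{kl}$ must reproduce Simons' identity. For $k=0$, the $1/F$ part is inverse mean curvature flow, whose classical evolution $\partial_t h_i{}^j=\frac{1}{H^2}\Delta h_i{}^j-\frac{2}{H^3}\nabla_iH\nabla^jH-\frac{2}{H}(h^2)_i{}^j+\frac{|A|^2}{H^2}h_i{}^j$ displays precisely the signs in the statement. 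The sign matters later: the term $-\frac{2}{F}(h^2)_i{}^j$ supplies the good term $-\frac{2}{\widehat uF}\kappa_{\max}^2$ in \Cref{cor:C2-bound}.
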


We first obtain a uniform upper bound for $F$.

\begin{proposition}
\label{prop:F-upper-bound}
Let $M_t\subset\mathbb R^{n+1}$ be a smooth admissible solution of
\eqref{eflow}. Then
\[
    F(\cdot,t)\le \max_{M_0}F
\]
for every $t\in[0,T)$.
\end{proposition}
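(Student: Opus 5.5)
We aim to derive a parabolic equation for $F$ (or, equivalently, for $1/F$) under the operator $\mathcal L$ of \eqref{eq:L}. We then apply the maximum principle. The starting point is \Cref{prop:evolution-h}. Contract \eqref{eq:evolution-h} with $F^i{}_j=\partial F/\partial h_i{}^j$, using $\mathcal L F = F^i{}_j\,\mathcal L h_i{}^j - \frac{1}{F^2}\ddot F(\nabla h,\nabla h)$. The second-derivative terms from $\partial_t$ and from the Hessian cancel against the first term of \eqref{eq:evolution-h}. Euler's relation for the $1$-homogeneous $F$ gives $F^i{}_jh_i{}^j=F$ and $F^i{}_j(h^2)_i{}^j=\dot F(h^2)$.

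Carrying this out, I expect
\begin{align*}
\mathcal L F = -\frac{2}{F^3}F^{ij}\nabla_iF\nabla_jF -\frac{2}{F}\dot F(h^2) + \Big(\frac{\dot F(h^2)}{F^2}+\frac{1}{c_{n,k}}\Big)F
= -\frac{2}{F^3}F^{ij}\nabla_iF\nabla_jF - \frac{\dot F(h^2)}{F}+\frac{F}{c_{n,k}}.
\end{align*}
The gradient term has a good sign, since $F^{ij}>0$ in $\Gamma_{k+1}$ (or $\Gamma_k$) by admissibility. Thus
\[
\mathcal L F\le \Big(\frac{1}{c_{n,k}}-\frac{\dot F(h^2)}{F^2}\Big)F\le 0,
\]
by the inequality $\dot F(h^2)/F^2\ge 1/c_{n,k}$ (Lemma~4.1 of \cite{GuanLi2026}), which was already used in \Cref{lem:evolution-U}. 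The operator $\mathcal L$ consists of $\partial_t$, an elliptic second-order part $-F^{-2}F^{ij}\nabla_{ij}$, and a first-order transport term $-\frac1{c_{n,k}}\langle X^\top,\nabla\cdot\rangle$. Hence at a spatial maximum of $F(\cdot,t)$ we have $\partial_tF\le 0$, and the parabolic maximum principle gives $F(\cdot,t)\le\max_{M_0}F$.

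One subtlety is the time derivative. \Cref{prop:evolution-h} is stated for $h_i{}^j$ along the normal flow $\partial_tX=f\nu$, so $\partial_t$ there is the derivative at fixed parameter in the normal parametrization. The maximum principle for $\max_{M_t}F$ is insensitive to tangential reparametrizations, because at a spatial maximum any tangential term $\langle V,\nabla F\rangle$ vanishes. So it does not matter that our scalar problem \eqref{eq:gamma-IVP} uses the spiral parametrization rather than the normal one.

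The main thing to double-check is the contraction step: the coefficients $-\frac{2}{F}(h^2)$ and $+\frac{\dot F(h^2)}{F^2}h$ must combine to $-\dot F(h^2)/F$, so that the zeroth-order coefficient is exactly $\frac1{c_{n,k}}-\frac{\dot F(h^2)}{F^2}$. This sign is what the cited inequality controls. Admissibility ensures $F>0$ and ellipticity, and the lower bound \Cref{prop:F-lower-bound} guarantees $F$ stays away from zero, so the division by $F$ is harmless on $[0,T)$.
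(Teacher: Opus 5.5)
Your proof is correct and follows the same route as the paper: contract \eqref{eq:evolution-h} with $F^{ij}$, discard the gradient term, use $\dot F(h^2)\ge F^2/c_{n,k}$, and apply the maximum principle. The one difference is that you correctly observe that the $\ddot F$ term cancels exactly upon contraction, whereas the paper's \eqref{eq:evolution-F} keeps a term $\frac{1}{F^2}F^{ij}\ddot F(\nabla_i h,\nabla^j h)$ and discards it by concavity of $F$, which gives the same inequality $\mathcal L F\le 0$.
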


\begin{proof}
Contracting~\eqref{eq:evolution-h} with $F^{ij}$ yields
\begin{align}
\mathcal L F
  &=
  \frac{1}{F^2}
  F^{ij}\,
  \ddot F
  \big(
     \nabla_i h,\nabla^j h
  \big)
  -\frac{2}{F^3}
  F^{ij}\nabla_iF\nabla_jF
  \notag\\
  &\quad
  -\left[
     \frac{\dot F(h^2)}{F^2}
     -\frac{1}{c_{n,k}}
   \right]F .
  \label{eq:evolution-F}
\end{align}
Since $F=\sigma_{k+1}/\sigma_k$ is concave in the admissible cone,
the first term on the right-hand side is nonpositive. Moreover,
\[
   F^{ij}\nabla_iF\nabla_jF\ge0,
\]
and the algebraic inequality
\[
   \dot F(h^2)\ge \frac{1}{c_{n,k}}F^2
\]
implies that the last term in~\eqref{eq:evolution-F} is also
nonpositive. Hence
\[
   \mathcal L F\le0
\]
at a spatial maximum of $F$. The maximum principle gives
\[
   \max_{M_t}F\le \max_{M_0}F,
\]
as claimed.
\end{proof}

Together with Proposition~\ref{prop:F-lower-bound}, we therefore have
\begin{equation}
    0<C_1\le F\le C_2<\infty
    \qquad\text{on }\mathbb S^n\times[0,T),
    \label{eq:F-two-sided}
\end{equation}
where the constants depend only on the initial hypersurface.

We next estimate the principal curvatures. Recall that
$\widehat u$ denotes the generalized support function introduced in
\eqref{eq:U-and-u}. By the $C^0$ and $C^1$ estimates established
previously,
\begin{equation}
    0<u_0\le \widehat u\le u_1<\infty ,
    \label{eq:uhat-two-sided}
\end{equation}
where \( u_0 \) and \( u_1 \) are uniform constants only depending on the
initial hypersurface.

\begin{proposition}
\label{prop:evolution-h-over-u}
Along the flow~\eqref{eflow},
\begin{align}
\mathcal L\left(
    \frac{h_i{}^j}{\widehat u}
    \right)
  &=
  \frac{1}{\widehat u F^2}
  \ddot F
  \big(
    \nabla_i h,\nabla^j h
  \big)
  +\frac{2}{\widehat u F^2}
    F^{kl}
    \nabla_k\left(
       \frac{h_i{}^j}{\widehat u}
    \right)
    \nabla_l\widehat u
  \notag\\
  &\quad
  -\frac{2}{\widehat u F^3}
   \nabla_iF\nabla^jF
  -\frac{2}{\widehat u F}(h^2)_i{}^j
  +\frac{2}{c_{n,k}}
   \frac{h_i{}^j}{\widehat u}.
  \label{eq:evolution-h-over-u}
\end{align}
\end{proposition}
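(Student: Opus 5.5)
This is a quotient-rule computation for $\mathcal L$ applied to $h_i{}^j/\widehat u$. The inputs are \eqref{eq:evolution-h} from Proposition~\ref{prop:evolution-h} for $h_i{}^j$ and \eqref{eq:evolution-U} from Lemma~\ref{lem:evolution-U} for $\widehat u$. Write $\mathcal L=\partial_t-\frac1{F^2}F^{kl}\nabla_k\nabla_l-\frac1{c_{n,k}}\langle X^\top,\nabla\rangle$.

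The first-order parts $\partial_t$ and $\langle X^\top,\nabla\rangle$ are derivations, so they obey the Leibniz rule. The second-order part gives the usual extra gradient terms. Explicitly, for $Q=h_i{}^j/\widehat u$,
\[
\mathcal L Q=\frac{\mathcal L h_i{}^j}{\widehat u}-\frac{h_i{}^j}{\widehat u^2}\mathcal L\widehat u
+\frac{2}{F^2}F^{kl}\frac{\nabla_k h_i{}^j\nabla_l\widehat u}{\widehat u^2}
-\frac{2}{F^2}F^{kl}\frac{h_i{}^j\nabla_k\widehat u\nabla_l\widehat u}{\widehat u^3}.
\]
Using $\nabla_k Q=\nabla_k h_i{}^j/\widehat u-h_i{}^j\nabla_k\widehat u/\widehat u^2$, the last two terms combine into
\[
\frac{2}{\widehat u F^2}F^{kl}\nabla_kQ\,\nabla_l\widehat u .
\]
This is exactly the gradient term appearing in \eqref{eq:evolution-h-over-u}.

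Next substitute the two evolution equations. From \eqref{eq:evolution-h}, dividing by $\widehat u$ gives
\[
\frac1{\widehat uF^2}\ddot F(\nabla_ih,\nabla^jh)-\frac{2}{\widehat uF^3}\nabla_iF\nabla^jF-\frac2{\widehat uF}(h^2)_i{}^j+\Big[\frac{\dot F(h^2)}{F^2}+\frac1{c_{n,k}}\Big]Q .
\]
From \eqref{eq:evolution-U}, the term $-\frac{h_i{}^j}{\widehat u^2}\mathcal L\widehat u$ equals $-\big(\frac{\dot F(h^2)}{F^2}-\frac1{c_{n,k}}\big)Q$. Adding these, the $\dot F(h^2)/F^2$ terms cancel and the $1/c_{n,k}$ terms add to $\frac{2}{c_{n,k}}Q$. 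This yields \eqref{eq:evolution-h-over-u}.

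The only point needing care is the compatibility of the operator $\mathcal L$ in the two inputs. Both \eqref{eq:evolution-h} and \eqref{eq:evolution-U} must be written with the same $\mathcal L$, including the same tangential transport term $-\frac1{c_{n,k}}\langle X^\top,\nabla\rangle$; the paper defines $\mathcal L$ once in \eqref{eq:L}, so this holds. One should also check that $h_i{}^j$ is treated as a mixed tensor, so that $\partial_t$ and covariant derivatives commute with division by the scalar $\widehat u$. Since $\widehat u$ is a scalar, the product rule applies verbatim. Beyond this bookkeeping, I expect no real obstacle.
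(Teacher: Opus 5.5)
Your proposal is correct and is essentially the paper's proof. Both apply the quotient rule for $\mathcal L$, repackage the cross terms from the second-order part as $\frac{2}{\widehat u F^2}F^{kl}\nabla_k(h_i{}^j/\widehat u)\nabla_l\widehat u$, and then substitute \eqref{eq:evolution-h} and \eqref{eq:evolution-U}, so that the $\dot F(h^2)/F^2$ terms cancel and the $1/c_{n,k}$ terms double; you also write out the intermediate Leibniz computation that the paper leaves implicit.
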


\begin{proof}
Using the quotient rule for $\mathcal L$, we obtain
\[
\begin{aligned}
\mathcal L\left(
    \frac{h_i{}^j}{\widehat u}
\right)
&=
\frac{1}{\widehat u}\mathcal L h_i{}^j
-\frac{h_i{}^j}{\widehat u^2}\mathcal L\widehat u  \\
&\quad
+\frac{2}{\widehat u F^2}
F^{kl}
\nabla_k\left(
   \frac{h_i{}^j}{\widehat u}
\right)
\nabla_l\widehat u .
\end{aligned}
\]
Substituting the evolution equations
\eqref{eq:evolution-U} and~\eqref{eq:evolution-h} gives
\eqref{eq:evolution-h-over-u}.
\end{proof}

\begin{corollary}[Uniform curvature estimate]
\label{cor:C2-bound}
Let $M_t$ be a smooth admissible solution of~\eqref{eflow} for
$t\in[0,T)$. Then there exists a constant $C>0$, depending only on
the initial hypersurface, such that
\begin{equation}
    |h|\le C
    \qquad\text{on }M_t,
    \qquad t\in[0,T).
    \label{eq:curvature-bound}
\end{equation}
In particular,
\begin{equation}
    \|\rho(\cdot,t)\|_{C^2(\mathbb S^n)}
    \le C,
    \qquad t\in[0,T).
    \label{eq:spherical-C2-bound}
\end{equation}
\end{corollary}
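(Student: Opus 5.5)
The plan is to apply the maximum principle to the largest eigenvalue of the matrix $h_i{}^j/\widehat u$, using the evolution equation of \Cref{prop:evolution-h-over-u}, and then use the two-sided bounds \eqref{eq:F-two-sided} and \eqref{eq:uhat-two-sided} to convert this into a bound for $|h|$.

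Step 1 is to set up the test function. Define
\[
  W(x,t):=\max\Big\{\frac{h_{ij}\xi^i\xi^j}{\widehat u}\;:\;|\xi|_g=1\Big\},
\]
and suppose $\max_{M_t}W$ is attained at $(x_0,t_0)$ with $t_0>0$. Rotate the frame so that $h_i{}^j$ is diagonal at $x_0$ and $W=h_1{}^1/\widehat u$. In the standard way, $h_1{}^1/\widehat u$ (extended by parallel transport) is a smooth lower support function for $W$ near $(x_0,t_0)$, so \eqref{eq:evolution-h-over-u} applies to it in the viscosity sense.

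Step 2 is to sign the terms at $(x_0,t_0)$. There the spatial gradient of $h_1{}^1/\widehat u$ vanishes, so the mixed gradient term $\frac{2}{\widehat u F^2}F^{kl}\nabla_k(h_1{}^1/\widehat u)\nabla_l\widehat u$ drops out. The term $\ddot F(\nabla_1 h,\nabla_1 h)$ is nonpositive by concavity of $F=\sigma_{k+1}/\sigma_k$ on $\Gamma_{k+1}$, and $-\frac{2}{\widehat u F^3}(\nabla_1F)^2\le 0$. The remaining zero-order terms give
\[
  0\le \mathcal L\Big(\frac{h_1{}^1}{\widehat u}\Big)
  \le -\frac{2}{\widehat u F}\kappa_1^2+\frac{2}{c_{n,k}}\frac{\kappa_1}{\widehat u}.
\]
If $\kappa_1>0$, this forces $\kappa_1\le F/c_{n,k}\le C_2/c_{n,k}$ by \eqref{eq:F-two-sided}. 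Therefore
\[
  W\le \max\Big\{\max_{M_0}W,\;\frac{C_2}{c_{n,k}u_0}\Big\},
\]
and by \eqref{eq:uhat-two-sided} this yields $\kappa_{\max}\le C$ uniformly.

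Step 3 is the lower bound on the principal curvatures. Since $\kappa\in\Gamma_{k+1}\subset\Gamma_1$, we have $\sum\kappa_i>0$, so each $\kappa_i\ge -(n-1)\kappa_{\max}\ge -C$. Hence $|h|\le C$, which is \eqref{eq:curvature-bound}.

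Step 4 is the passage to \eqref{eq:spherical-C2-bound}. \Cref{prop:C0-rho} and \Cref{prop:C1-rho} control $\rho$, $\gamma$ and $D\gamma$, hence also $\alpha$ and $\omega$; here $\omega$ is bounded below because $\widehat u=\rho/\omega\le u_1$ with $\rho\ge\min\rho_0$. \Cref{lem:h-and-W} expresses $\mathfrak h_{ij}$ as $-\gamma_{ij}$ plus terms depending only on $\gamma$, $D\gamma$ and $\mathsf A$, with $h_{ij}=\frac{\rho}{\omega}\mathfrak h_{ij}$. The metric $g_{ij}$ is uniformly equivalent to $\sigma_{ij}$: the lower bound follows from $\det(g)\sim\rho^{2n}\omega^2$ together with the upper bounds on $g$. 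So $|h|_g\le C$ gives $|\gamma_{ij}|_\sigma\le C$, hence $\|\rho\|_{C^2}\le C$.

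The main obstacle is the justification of $\dot F(h^2)\ge F^2/c_{n,k}$ and the exact cancellation that produces the coefficient $\frac{2}{c_{n,k}}$ in \eqref{eq:evolution-h-over-u}. I take these from \Cref{prop:evolution-h} and \Cref{lem:evolution-U}. The genuinely new input is \eqref{eq:uhat-two-sided}, namely the uniform positive lower bound on $\widehat u$ from \Cref{lem:evolution-U}, which replaces the role of star-shapedness. Once that bound and \eqref{eq:F-two-sided} are available, the maximum-principle argument above is routine.
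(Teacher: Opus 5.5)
Your proposal is correct and follows essentially the same route as the paper. Both apply the maximum principle to the largest eigenvalue of $h_i{}^j/\widehat u$ using \eqref{eq:evolution-h-over-u}, with $F\le C_2$ and $\widehat u\ge u_0$, then use $H>0$ for the lower curvature bound and the graph formula for $h_{ij}$ to bound $\rho$ in $C^2$. Your Step 4 adds detail the paper leaves implicit, namely $\omega\ge\min\rho_0/u_1$ and $\det g=\rho^{2n}\omega^2\det\sigma$, which give uniform equivalence of $g$ and $\sigma$.
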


\begin{proof}
Set
\[
    W_i{}^j:=\frac{h_i{}^j}{\widehat u}.
\]
Let $\Lambda$ denote the largest eigenvalue of $W_i{}^j$, and suppose
that $\Lambda$ attains a spatial maximum at $(x_0,t_0)$. Choose a local
orthonormal frame at $(x_0,t_0)$ in which $h_i{}^j$ is diagonal and
\[
    \Lambda=W_1{}^1
    =\frac{\kappa_{\max}}{\widehat u}.
\]
By the tensor maximum principle, at this point the first-order drift
term in~\eqref{eq:evolution-h-over-u} does not contribute to the
evolution of $\Lambda$. Since $F$ is concave,
\[
    \ddot F(\nabla_1h,\nabla^1h)\le0,
\]
and the gradient term involving $\nabla_1F$ is also nonpositive.
Consequently,
\begin{align*}
  \mathcal L\Lambda
  &\le
  -\frac{2}{\widehat u F}\kappa_{\max}^2
  +\frac{2}{c_{n,k}}\Lambda \\
  &=
  -\frac{2\widehat u}{F}\Lambda^2
  +\frac{2}{c_{n,k}}\Lambda .
\end{align*}
Using~\eqref{eq:F-two-sided} and~\eqref{eq:uhat-two-sided}, we obtain
\[
  \mathcal L\Lambda
  \le
  -\frac{2u_0}{C_2}\Lambda^2
  +\frac{2}{c_{n,k}}\Lambda .
\]
Hence the maximum principle implies
\[
  \Lambda
  \le
  \max\left\{
      \max_{M_0}\frac{\kappa_{\max}}{\widehat u},
      \frac{C_2}{c_{n,k}u_0}
  \right\}.
\]
Together with the upper bound for $\widehat u$, this gives
\[
    \kappa_{\max}\le C.
\]

Since the solution remains admissible,
\[
    \kappa\in\Gamma_{k+1}\subset\Gamma_1,
\]
and therefore
\[
    H=\sum_{i=1}^n\kappa_i>0.
\]
It follows that
\[
    \kappa_{\min}
    \ge -(n-1)\kappa_{\max},
\]
and hence
\[
    |h|\le C.
\]

Finally, the expression of the Weingarten tensor in terms of the
radial graph $\rho$, together with the uniform $C^0$ and $C^1$
estimates, shows that the curvature bound is equivalent to a uniform
bound for $D^2\rho$. This proves~\eqref{eq:spherical-C2-bound}.
\end{proof}

\section{Convergence}

The convergence argument is entirely geometric and is unchanged from
that in~\cite{GuanLi2026, GuanLi2021,BGL}. We recall the conclusion.

\begin{theorem}[Exponential convergence]
\label{thm:convergence}
Under the assumptions of \Cref{thm:main}, the
hypersurfaces $M_t$ converge smoothly and exponentially fast to a
round sphere as $t\to\infty$.

More precisely, there exists a constant $\rho_\infty>0$ such that,
after expressing $M_t$ in the spiral radial coordinates used
above,
\[
   \rho(\cdot,t)\longrightarrow \rho_\infty
   \qquad\text{in }C^\infty(\mathbb S^n),
\]
and for every $m\ge0$ there exist constants $C_m,\lambda>0$ such that
\begin{equation}
   \|\rho(\cdot,t)-\rho_\infty\|_{C^m(\mathbb S^n)}
   \le C_m e^{-\lambda t}.
   \label{eq:exponential-convergence}
\end{equation}
Equivalently, $M_t$ converges exponentially in $C^\infty$ to the
corresponding round sphere.
\end{theorem}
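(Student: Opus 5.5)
Having the uniform estimates in hand, I would first upgrade them to all orders. By \Cref{prop:C0-rho}, \Cref{prop:C1-rho}, \eqref{eq:F-two-sided} and \Cref{cor:C2-bound}, the scalar equation \eqref{eq:gamma-IVP} is uniformly parabolic with bounded coefficients on $\mathbb S^n\times[0,T)$. This holds because $F$ is pinched between $C_1$ and $C_2$, and the principal curvatures stay in a compact subset of $\Gamma_{k+1}$, since $\sigma_{k+1}/\sigma_k\ge C_1$ with $|\kappa|\le C$. The operator $\widetilde F$ is concave in the second derivatives of $\gamma$. Krylov--Safonov and Evans--Krylov therefore give $C^{2,\beta}$ bounds, and Schauder bootstrapping gives uniform $C^m$ bounds for every $m$, independent of $T$. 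This yields $T=\infty$. Uniqueness follows from standard parabolic theory for \eqref{eq:gamma-IVP}.

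For the limit, I would use a monotone geometric quantity rather than the spiral coordinates. Along \eqref{eflow} the normal speed is $1/F-u/c_{n,k}$. The Minkowski formulas give $\int \sigma_k\, u\,(\ldots)$ identities, so the volume $|\Omega_t|$ is nondecreasing while the quermassintegral $\int\sigma_{k}$ (suitably normalized) is preserved. Equivalently, I would use the Newton--Maclaurin-type monotonicity proved in \cite{GuanLi2026}: $\frac{d}{dt}V_{n+1}\ge0$, with equality only for $\sigma_{k+1}/\sigma_k=c_{n,k}/u$ pointwise, i.e.\ umbilic spheres. Integrating in $t$ and using the uniform $C^m$ bounds gives $\int_0^\infty(\text{defect})\,dt<\infty$. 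By Arzel\`a--Ascoli, every subsequential limit of $M_t$ is therefore a round sphere. Since the origin is fixed and the speed vanishes at the limit, the sphere must satisfy $1/F=u/c_{n,k}$, which for a sphere of radius $r$ centered at $a$ reads $r=\langle X,\nu\rangle$. This forces $a=0$, and the radius is determined by the conserved quantity. Hence the limit is unique, and in spiral coordinates $\rho(\cdot,t)\to\rho_\infty$ in $C^\infty$.

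For the exponential rate, I would linearize \eqref{eq:gamma-IVP} at $\gamma\equiv\ln\rho_\infty$. This is the step I expect to be the main obstacle, because the spiral parametrization introduces the drift terms $\mathcal K\gamma$ through $\alpha$ and $\omega$. At the sphere these contribute a first-order skew term $\mathcal K^i\varphi_i$, which is antisymmetric in $L^2(\mathbb S^n)$ since $\mathcal K$ is divergence-free. Consequently it does not affect the real parts of the spectrum. The remaining linear operator is $\frac{1}{c_{n,k}}\cdot\frac{1}{n}(\Delta_\sigma\varphi+n\varphi)-\varphi$ up to positive constants. Its kernel consists only of the constants (translations are excluded, as the origin is pinned by the $-u/c_{n,k}$ term), and the constant mode is fixed by the conserved quermassintegral. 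An $L^2$ energy estimate on the projection orthogonal to constants, in which the skew term drops out, then gives $\|\varphi\|_{L^2}\le Ce^{-\lambda t}$. Interpolating with the uniform $C^m$ bounds upgrades this to \eqref{eq:exponential-convergence}. As a fallback, I would instead apply the maximum principle to $|\nabla\gamma|^2$ and to $\dot\gamma$ with the structure of \Cref{lem:evolution-U}, using the strict inequality $\dot F(h^2)/F^2-1/c_{n,k}\ge0$ near the sphere, to get exponential decay of the oscillation directly.
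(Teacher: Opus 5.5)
Your regularity step is fine, but the identification of the limit has a genuine gap. You misidentify the equality case of your monotone quantity. The defect vanishes on every umbilic hypersurface, that is, on every round sphere $S_{a,r}$ whatever its center $a$, and not only where $\sigma_{k+1}/\sigma_k=c_{n,k}/u$. Indeed, on $S_{a,r}$ one has $\frac{d}{dt}|\Omega_t|=\frac{1}{c_{n,k}}\big(r|\partial B_r|-(n+1)|B_r|\big)=0$, while the speed there is $1/F-u/c_{n,k}=-\langle a,\nu\rangle/c_{n,k}$, which is nonzero for $a\neq0$. (Also, Newton--Maclaurin by itself gives monotonicity of $\int\sigma_{k-1}$ for $k\ge1$; monotonicity of the volume needs the Heintze--Karcher/Ros inequality in addition.)

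Hence compactness only shows that subsequential limits are spheres of the radius fixed by $\int\sigma_k$. Their centers are unknown and could depend on the subsequence, so ``the speed vanishes at the limit'' is exactly what remains to be proved. This is the heart of the statement: $\rho(\cdot,t)\to\rho_\infty$ in spiral coordinates means precisely that the limit is centered at the origin, and your linearization at $\gamma\equiv\ln\rho_\infty$ presupposes it. Your fallback does not close the gap either. The quantity $\dot F(h^2)/F^2-1/c_{n,k}$ vanishes at a sphere, so there is no strict inequality from which to extract a rate.

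You can supply the missing step as follows. Under \eqref{eflow} an off-center sphere $S_{a,r}$ moves by translation, with center $e^{-s/c_{n,k}}a$ and fixed radius $r$. Moreover, $\max_{M_t}|X|=\max\rho(\cdot,t)$ is nonincreasing by the argument of \Cref{prop:C0-rho} (recall $|X|=\rho$ because $e^{\gamma\mathsf A}$ is orthogonal). If $M_{t_j}\to S_{a,r}$, the uniform estimates and uniqueness give $M_{t_j+s}\to S_{e^{-s/c_{n,k}}a,r}$. Therefore $r+e^{-s/c_{n,k}}|a|\ge\lim_{t\to\infty}\max_{M_t}|X|=r+|a|$, which forces $a=0$. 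With this, uniqueness of the limit and your spectral-gap step go through. In fact, since $u=\alpha\widehat u$, the constraint term in the $\gamma$-equation is $\alpha/c_{n,k}$; the displayed $1/(c_{n,k}\alpha)$ is a slip. The $\mathcal K\varphi$ contributions then cancel, and the linearization is exactly $\frac{1}{nc_{n,k}}\Delta_\sigma$. A skew drift would have been harmless anyway, as you observe.

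The paper simply invokes the maximum-principle argument of the earlier works: exponential decay of the gradient, hence of the oscillation, with the radius fixed by the preserved quermassintegral. It thus obtains centering and rate simultaneously. Your Lyapunov--compactness--linearization route is more transparent about how the rotational terms enter. However, it needs the centering argument above plus a genuine nonlinear energy estimate, in which the mean of $\varphi$ is controlled to second order through the conserved $\int\sigma_k$.
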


\begin{proof}[Proof of \Cref{thm:main}]
By Propositions~\ref{prop:C1-rho},
\ref{prop:F-lower-bound}, \ref{prop:F-upper-bound}, and
Corollary~\ref{cor:C2-bound}, the radial function satisfies uniform
$C^2$ estimates and
\[
    0<C_1\le F\le C_2.
\]
Thus the scalar equation~\eqref{eq:gamma-IVP} remains uniformly
parabolic. Standard parabolic regularity gives uniform estimates of
all higher derivatives, and the solution therefore exists for all
$t\ge0$.

At this point the geometric flow is precisely the flow studied
in~\cite{GuanLi2009}. The long-time convergence argument there applies
without change. In particular, the gradient decays exponentially, the
oscillation of the radial function tends to zero, and the preserved
quermassintegral determines a unique limiting radius $\rho_\infty$.
Consequently,
\[
    M_t\longrightarrow S_{\rho_\infty}
\]
smoothly and exponentially fast as $t\to\infty$.
\end{proof}

\section*{Acknowledgments}
The author thanks Pengfei Guan for bringing the work of Flynn and Reznikov
to his attention and for numerous helpful discussions.
He also thanks Jacob Reznikov for valuable explanations of that work.
The author gratefully acknowledges the hospitality of the Centre de
recherches math\'ematiques (CRM) and McGill University, as well as
the support of CRM and the Simons Foundation
 during his visit as a CRM-Simons professor in 2024.

\end{document}